\newtheorem{theorem}{Theorem}[section]
\newtheorem{proposition}[theorem]{Proposition}
\newtheorem{lemma}[theorem]{Lemma}
\newtheorem{problem}[theorem]{Problem}
\newtheorem{corollary}[theorem]{Corollary}
\newtheorem*{claim*}{Claim}
\theoremstyle{definition}
\newtheorem{example}[theorem]{Example}
\newcommand{\G}{G(K)}
\newcommand{\C}{\Bbb C}
\newcommand{\Z}{\Bbb Z}
\newcommand{\Q}{\Bbb Q}
\newcommand{\D}{\Delta}
\newcommand{\la}{\langle}
\newcommand{\ra}{\rangle}
\newcommand{\ord}{\mathcal{O}}
\newcommand{\ind}{\mathrm{Ind}}
\theoremstyle{remark}
\newtheorem{remark}[theorem]{Remark}
\numberwithin{equation}{section}
\begin{document}
\title{Twisted Alexander vanishing order of knots}

\author{Katsumi Ishikawa, Takayuki Morifuji, and Masaaki Suzuki}

\thanks{2020 {\it Mathematics Subject Classification}. 
Primary 57K14, Secondary 57K10.}

\thanks{{\it Key words and phrases.\/}
Twisted Alexander polynomial, non-fibered knot, $p$-group.}

\begin{abstract}
Based on a vanishing theorem for non-fibered knots due to Friedl and Vidussi, we define the twisted Alexander vanishing order of a knot to be the order of the smallest finite group such that the corresponding twisted Alexander polynomial is zero. In this paper, we show its basic properties, and provide several explicit values for knots with $10$ or fewer crossings. Moreover, we characterize a finite group admitting the zero-twisted Alexander polynomial. 
\end{abstract}

\address{Research Institute for Mathematical Sciences, Kyoto University, Kyoto 606-8502, Japan}
\email{katsumi@kurims.kyoto-u.ac.jp}

\address{Department of Mathematics, Hiyoshi Campus, Keio University, Yokohama 223-8521, Japan}
\email{morifuji@keio.jp}

\address{Department of Frontier Media Science, 
Meiji University, 4-21-1 Nakano, Nakano-ku, Tokyo 
164-8525, Japan}
\email{mackysuzuki@meiji.ac.jp}

\maketitle

\section{Introduction}

The Alexander polynomial $\D_K(t)$ of a knot $K$ is a fundamental and 
important tool in the study of knots. It is practical but not sufficient 
to distinguish between two knots. In fact, 
many knots share the same Alexander polynomial, and further, 
the polynomial might be trivial in the sense that $\D_K(t)=1$ holds. 
Nevertheless, it is known that 
the Alexander polynomial never vanishes because it has the following 
basic property: $\D_K(1)=\pm1$. 
Then, it seems a natural question whether the \textit{twisted} Alexander polynomial of a knot associated to a representation has the same property or not. 

The twisted Alexander polynomial is introduced by Lin~\cite{Lin01-1} for knots in $S^3$, and by Wada~\cite{Wada94-1} for finitely presentable groups. 
It is defined for the pair of a knot and its representation, 
and has lots of applications to knot theory and low-dimensional 
topology (see \cite{FV10-1}, \cite{Morifuji15-1}). 
For example, a fibered knot, 
whose complement admits a structure of a surface 
bundle over the circle such that the closures of the fibers are Seifert surfaces, is detected by twisted Alexander polynomials 
associated to regular representations of finite groups 
(see \cite{FV11-1}). 
Furthermore, as a stronger result, 
Friedl and Vidussi show the following vanishing theorem 
for the twisted Alexander polynomial of a non-fibered knot. 

\begin{theorem}[{\cite[Theorem~1.2]{FV13-1}}]\label{thm:FV}
For a non-fibered knot $K$, there exists an epimorphism $f$ of the knot group $G(K)$ onto a finite group $G$ such that the twisted Alexander polynomial associated to the composition of $f$ and the regular representation of $G$ vanishes. 
\end{theorem}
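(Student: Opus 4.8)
The plan is to trade the finite-dimensional representation for a covering space, and then to invoke the virtual fibering of knot complements together with properties of the Thurston norm; it is enough to exhibit a single epimorphism that works.

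\textbf{Step 1: reduction to a covering.} Fix the abelianization $\phi\colon\G\twoheadrightarrow\Z$, a generator of $H^1(S^3\setminus K;\Z)$, and write $M=S^3\setminus\nu K$. For an epimorphism $f\colon\G\to G$ onto a finite group, let $p\colon\widetilde M_f\to M$ be the covering corresponding to the normal subgroup $\ker f$ and set $\widetilde\phi:=p^*\phi=\phi|_{\ker f}$. The $\Q[\G]$-module underlying the composition of $f$ with the regular representation $\rho_G$ is, by construction, induced from $\ker f$, so Shapiro's lemma identifies the $\Q$-coefficient twisted Alexander module of $(K,f\otimes\rho_G)$ with $H_1(\widetilde M_f;\Q[t^{\pm1}])$, where $t$ acts through $\widetilde\phi$; consequently the twisted Alexander polynomial vanishes exactly when this module has positive rank over $\Q[t^{\pm1}]$, i.e.\ exactly when $H_1$ of the infinite cyclic cover of $\widetilde M_f$ determined by $\widetilde\phi$ is infinite-dimensional over $\Q$. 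Conversely, every finite cover of $M$ is dominated by a regular one, which arises from such an $f$, and vanishing of the Alexander polynomial is preserved when passing to a further finite cover (first Betti numbers only grow, by transfer). Hence it suffices to produce \emph{one} finite cover $\widetilde M\to M$ with $\D_{\widetilde M,\widetilde\phi}=0$, where $\widetilde\phi$ is the pullback of $\phi$.

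\textbf{Step 2: a criterion, and persistence of non-fiberedness.} First, non-fiberedness passes to every finite cover: $\ker\widetilde\phi=\ker\phi\cap\pi_1\widetilde M$ has finite index in $\ker\phi$, and the latter is not finitely generated by Stallings' fibration theorem because $K$ is non-fibered; hence $\ker\widetilde\phi$ is not finitely generated and $\widetilde\phi$ is a non-fibered class. Second, pick a Thurston-norm-minimizing surface $\widetilde S\subset\widetilde M$ dual to $\widetilde\phi$ and consider the Mayer--Vietoris sequence of the infinite cyclic cover of $\widetilde M$, viewed as a union of translates of $\widetilde M\setminus\nu\widetilde S$; examining the connecting map on $H_0$ shows that \emph{if $\widetilde S$ can be chosen disconnected with $\widetilde M\setminus\nu\widetilde S$ connected, then $H_1(\widetilde M;\Q[t^{\pm1}])$ has positive rank, so $\D_{\widetilde M,\widetilde\phi}=0$}. (For $M$ itself this is impossible, in agreement with $\D_{M,\phi}=\D_K$ and $\D_K(1)=\pm1$.) So it is enough to find a finite cover of $M$ in which a norm-minimizing surface dual to the pullback of $\phi$ is disconnected with connected complement.

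\textbf{Step 3: the geometric input.} By the virtual fibering theorem for $3$-manifolds --- Agol's RFRS criterion combined with the virtual specialness of Haken $3$-manifold groups (Wise), applied via the JSJ/geometric decomposition of $M$ --- the group $\G$ is virtually RFRS, so some finite cover $q\colon\widehat M\to M$ fibers over $S^1$. Then $b_1(\widehat M)\ge2$, for otherwise the fibration class would be an integer multiple of $q^*\phi$ and $q^*\phi$ would be fibered, against Step 2. Inside $H^1(\widehat M;\R)$ the fibered classes fill the open cones on a nonempty set of top faces of the Thurston norm ball, and $q^*\phi$ avoids all of them. The remaining task is to make the Alexander polynomial of the pullback of $\phi$ vanish in a finite cover; the way to do this is to run Agol's RFRS tower once more above $\widehat M$, keeping fixed the line $\R\cdot q^*\phi$, until a Thurston-norm-minimizing surface dual to $\widetilde\phi$ becomes disconnected with connected complement, which by Step 2 forces $\D_{\widetilde M,\widetilde\phi}=0$. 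By Step 1 the corresponding epimorphism then has vanishing twisted Alexander polynomial.

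\textbf{Main obstacle.} Everything except Step 3 is bookkeeping; Step 3 carries the whole difficulty. It is straightforward that $M$ virtually fibers and that $q^*\phi$ stays non-fibered, but one must control the RFRS tower above $\widehat M$ finely enough that the pulled-back class $\widetilde\phi$ --- which cannot be moved off its line --- ends up with an \emph{identically zero} Alexander polynomial, not merely a non-monic one of less-than-fibered degree. Equivalently, one must show that in non-fibered directions norm-minimizing surfaces can be virtually disconnected while remaining taut, and this rests on separability of surface subgroups (Wise), the combinatorics of Agol's tower, and the behaviour of the Thurston norm and Reidemeister torsion under finite covers (Gabai, McMullen, Friedl--Kim). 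That analysis is the technical heart of Friedl--Vidussi's proof.
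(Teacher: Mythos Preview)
The paper does not prove this theorem: Theorem~\ref{thm:FV} is quoted from Friedl--Vidussi \cite{FV13-1} and serves only as motivation for defining the TAV order. There is no in-paper proof to compare against; what follows assesses your sketch on its own terms and against the original.

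Steps 1 and 2 are sound reductions. The identification of the regular-representation twisted Alexander module with $H_1(\widetilde M_f;\Q[t^{\pm1}])$ via Shapiro is standard, vanishing persists under further finite covers by transfer, and the Mayer--Vietoris criterion in Step 2 is correct: if $\widetilde S$ is dual to $\widetilde\phi$ with $k\ge 2$ components and connected complement, the connecting map into $H_0(\widetilde S)\otimes\Q[t^{\pm1}]\cong\Q[t^{\pm1}]^k$ has image of rank $k-1$, so $H_1(\widetilde M;\Q[t^{\pm1}])$ has positive rank.

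The gap is Step 3, and your ``Main obstacle'' paragraph concedes it. You assert that iterating the RFRS tower above $\widehat M$ will eventually produce, for the \emph{fixed} pulled-back class $\widetilde\phi$, a norm-minimizing dual surface that is disconnected with connected complement --- but you give no reason why. Agol's RFRS machinery guarantees that \emph{some} cohomology class becomes fibered in a cover; it does not by itself manufacture your specific surface configuration for a prescribed non-fibered direction, and nothing in your sketch explains how surface-subgroup separability or the combinatorics of the tower would force it. The Friedl--Vidussi argument does run through the RFRS tower, but its endgame is a direct rank comparison for the twisted homology along the tower rather than the geometric disconnected-surface criterion you propose; in any case that analysis is precisely the content you have deferred. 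As written, Step 3 names the ingredients and the target but supplies no argument connecting them --- it is a plausible plan, not a proof.
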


In this paper, we call a finite group $G$ a \textit{twisted Alexander vanishing (TAV) group of a knot $K$} if there exists an epimorphism of $G(K)$ onto $G$ such that the twisted Alexander polynomial associated to the regular representation of $G$ is zero. Thus, in view of Theorem \ref{thm:FV}, 
it is natural to raise the following challenge; 
\textit{for each non-fibered knot $K$, 
find an epimorphism of $G(K)$ onto a TAV group $G$.} 
Moreover, we may ask the following question; 
\textit{what is the smallest TAV group?}

In our previous paper \cite{MS22-1}, 
we exhibited some concrete examples which answer the above questions. In order to state the results of this paper precisely, 
let us introduce the notion of the \textit{twisted Alexander vanishing order} 
$\ord(K)$ of a non-fibered knot $K$. 
Namely, we define $\ord(K)$
to be the order of the smallest TAV group of $K$, 
and call it the \textit{TAV order} of $K$ in short (this number is called the \textit{minimal order} of $K$ in \cite{MS22-1}). 
On the other hand, for a fibered knot $K$, 
we set $\ord(K)=+\infty$,  
because its twisted Alexander polynomial is monic 
(see \cite{Cha03-1}, \cite{FK06-1}, \cite{GKM05-1}), 
and hence never vanishes. Hereafter, by abuse of terminology, 
we also simply call a finite group $G$ a \textit{TAV group} if $G$ is a TAV group of some knot $K$. 

In general, the determination of the TAV order of a given knot seems to be difficult. However, we can show the following basic properties of 
$\ord(K)$. 
Let $\mathcal{K}$ be the set of isotopy classes of oriented knots in the $3$-sphere, and $\mathcal{N}\subset\mathcal{K}$ the subset consisting of non-fibered knots. Then, the TAV order of a knot induces a function 
$\ord \colon \mathcal{K}\to\mathbb{N}\cup\{+\infty\}$. 
Our first theorem of this paper is the following. 

\begin{theorem}\label{thm:main-1}
The TAV order $\ord\colon\mathcal{K}\to\mathbb{N}\cup\{+\infty\}$ has the following properties: 
\begin{itemize}
\item[(i)]
For any knot $K\in\mathcal{K}$, $\ord(K)\geq24$ holds. 
\item[(ii)]
The restriction $\ord|_{\mathcal{N}}\colon\mathcal{N}\to\mathbb{N}$ is unbounded. 
\item[(iii)]
For the connected sum $K_1\#K_2$ of two knots $K_1,K_2\in\mathcal{K}$, it holds that $\ord(K_1\#K_2)=\min\{\ord(K_1),\ord(K_2)\}$.
\item[(iv)]
If there is an epimorphism from $G(K_1)$ to $G(K_2)$, $\ord(K_1)\leq\ord(K_2)$ holds.
\item[(v)]
For a periodic knot $K$ and its quotient knot $K'$, 
$\ord(K)\leq\ord(K')$ holds. 
\item[(vi)]
If there is a proper degree one map $E_K\to E_{K'}$, 
where $E_K$ denotes the exterior of a knot $K$, 
then 
$\ord(K)\leq\ord(K')$ holds. 
\end{itemize}
\end{theorem}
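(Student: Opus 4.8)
The plan is to obtain (v) and (vi) from (iv), to prove (iii) and (iv) from standard functoriality of twisted Alexander polynomials, and to isolate (i) and (ii) as the substantive points.

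For (iv), suppose $\psi\colon G(K_1)\to G(K_2)$ is an epimorphism; after fixing orientations it is compatible with the abelianizations onto $\Z$ (indeed $\psi_*\colon H_1\to H_1$ is a surjection $\Z\to\Z$, hence an isomorphism, and $\psi$ may be taken to send a meridian to a meridian). By the divisibility of twisted Alexander polynomials under epimorphisms of knot groups (Kitano--Suzuki), $\Delta_{K_2,\rho}(t)$ divides $\Delta_{K_1,\rho\circ\psi}(t)$ for every representation $\rho$ of $G(K_2)$, so $\Delta_{K_2,\rho}=0$ forces $\Delta_{K_1,\rho\circ\psi}=0$. Taking $\rho$ to be the regular representation of a smallest TAV group $G$ of $K_2$ (the claim being vacuous when $K_2$ is fibered) produces an epimorphism $G(K_1)\to G$ with vanishing regular twisted Alexander polynomial, whence $\ord(K_1)\le|G|=\ord(K_2)$. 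Part (vi) follows at once, since a proper degree-one map $E_K\to E_{K'}$ induces a $\pi_1$-epimorphism $G(K)\to G(K')$ compatible with the maps to $\Z$. Part (v) reduces to the same input once one observes that a periodic knot $K$ with quotient $K'$ also admits an epimorphism $G(K)\to G(K')$: writing $A$ for the symmetry axis and $A'$ for its image, the free $\Z/n$-action on $S^3\setminus(N(K)\cup N(A))$ realizes $E_K\setminus N(A)$ as an $n$-fold cyclic cover of $E_{K'}\setminus N(A')$, and since $\pi_1(E_{K'}\setminus N(A'))$ is generated by this index-$n$ subgroup together with a meridian $\mu_{A'}$ of $A'$ generating the deck group, killing $\mu_{A'}$ and the meridian of $A$ gives a meridian-preserving epimorphism $G(K)\to G(K')$; now invoke (iv).

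For (iii), use $E_{K_1\#K_2}=E_{K_1}\cup_A E_{K_2}$ with $A$ a meridional annulus, so $G(K_1\#K_2)=G(K_1)*_{\la\mu\ra}G(K_2)$, together with the multiplicativity $\Delta^{\mathrm{reg}}_{K_1\#K_2}\doteq\Delta^{\mathrm{reg}}_{K_1}\cdot\Delta^{\mathrm{reg}}_{K_2}/\det(t\,\rho(\mu)-I)$, the denominator being a nonzero polynomial. For ``$\le$'': extend a TAV epimorphism $g\colon G(K_2)\to G$ over the amalgam by letting $G(K_1)$ act through $\la\mu\ra\to G$; the $K_2$-factor then vanishes while the $K_1$-factor, whose representation has abelian image, is a nonzero product of shifts of $\Delta_{K_1}$, so $\Delta^{\mathrm{reg}}_{K_1\#K_2}=0$ and $\ord(K_1\#K_2)\le\ord(K_2)$, and symmetrically $\ord(K_1\#K_2)\le\ord(K_1)$. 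For ``$\ge$'': given any TAV group $G$ of $K_1\#K_2$ with epimorphism $f$, multiplicativity forces $\Delta^{\mathrm{reg}}_{K_i,f|_{G(K_i)}}=0$ for some $i$; since the regular representation of $G$ restricted to $H_i:=f(G(K_i))$ is $[G:H_i]$ copies of the regular representation of $H_i$, the group $H_i$ is a TAV group of $K_i$, so $|G|\ge|H_i|\ge\ord(K_i)\ge\min\{\ord(K_1),\ord(K_2)\}$; minimizing over $G$ yields the reverse inequality and hence equality. For (i), I would first verify that every finite group of order at most $23$ is metabelian: a solvable group with non-abelian commutator subgroup has order at least $12$ (its commutator subgroup is proper of order $\ge6$), and the short classification of groups of orders $12,\dots,23$ shows the commutator subgroup is abelian throughout this range — in particular every group of order $\le16$ is a $2$-group of nilpotency class $\le3$, hence metabelian — the smallest non-metabelian group being $\mathfrak{S}_4$ of order $24$. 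It then remains to show that a finite metabelian group is never a TAV group of any knot: an epimorphism $f\colon G(K)\to G$ with $G$ metabelian kills the second derived subgroup $G(K)^{(2)}$, so the associated cover of $E_K$ is dominated by the universal metabelian cover, and decomposing the regular representation of $G$ into monomial irreducible summands and applying Shapiro's lemma shows each factor of $\Delta^{\mathrm{reg}}_{K,f}$ is an abelian twisted Alexander polynomial of a finite cover of $E_K$ in the direction pulled back from $E_K\to S^1$, which is nonzero because the relevant metabelian-cover homology of a knot exterior is torsion after the appropriate Ore localization (this non-vanishing is essentially the characterization of TAV groups established later in the paper). Hence $\ord(K)\ge24$ for every $K$.

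For (ii), I would exhibit, for each $N$, a non-fibered knot $K_N$ admitting no TAV group of order $\le N$. By part (i) it suffices to arrange that every quotient of $G(K_N)$ of order $\le N$ is metabelian — equivalently, that $G(K_N)$ surjects onto none of the finitely many non-metabelian groups of order $\le N$ — for such a knot then has no TAV group of order $\le N$. Such knots can be produced by making the non-fiberedness detectable only arbitrarily far up the derived series, forcing every TAV group of $K_N$ to have derived length growing with $N$, and using that the minimal order of a finite solvable group of given derived length tends to infinity; this can be carried out within the framework behind Theorem~\ref{thm:FV}. The main obstacles are the non-vanishing of twisted Alexander polynomials for metabelian representations needed in (i) and the explicit construction in (ii); parts (iii)--(vi) are formal consequences of the functoriality and multiplicativity of twisted Alexander polynomials.
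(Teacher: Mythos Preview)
Your treatment of (iv), (v), (vi) is correct and matches the paper: Kitano--Suzuki--Wada divisibility gives (iv), and (v), (vi) follow because periodic quotients and proper degree-one maps induce epimorphisms of knot groups. Your argument for (iii) is also essentially the paper's (which quotes Cho's product formula for Wada's invariant); your ``$\geq$'' direction is in fact a bit cleaner than the paper's, since you explicitly pass to the subgroup $H_i=f(G(K_i))$ and use that $\rho|_{H_i}$ is a multiple of the regular representation of $H_i$.

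The substantive gap is in (i). Your key claim, that a finite \emph{metabelian} group is never a TAV group, is false: $D_{15}=\Z_2\ltimes\Z_{15}$ is metabelian with $[D_{15},D_{15}]\cong\Z_{15}$, and by the characterization theorem you allude to (Theorem~\ref{thm:main-4}) it \emph{is} a TAV group, since $\Z_{15}$ is not a $p$-group. The Shapiro/monomial reduction you sketch does reduce to one-dimensional twisted polynomials on finite cyclic covers, but those can and do vanish; the non-vanishing input you need is precisely that $[G,G]$ is a $p$-group, not merely that $G$ is metabelian. The paper's proof of (i) proceeds exactly along these lines: one checks that the only non-abelian groups of order $<24$ which are normally generated by a single element (there are twelve) all satisfy $[G,G]$ is an abelian $p$-group, and then invokes Proposition~\ref{pro:metabelian} (the Friedl--Powell isomorphism lemma applied to the meridian inclusion $S^1\hookrightarrow E_K$) to conclude non-vanishing. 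Your reduction ``all groups of order $\le23$ are metabelian'' is true but not sufficient; you still need the $p$-group observation for $[G,G]$. (Incidentally, the sentence ``every group of order $\le16$ is a $2$-group'' is not what you meant.)

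Your proposal for (ii) inherits this error, since it rests on ``metabelian $\Rightarrow$ not TAV'', and is in any case only a heuristic (``making the non-fiberedness detectable only arbitrarily far up the derived series'') rather than a construction. The paper's argument is completely different and quite concrete: take any non-fibered $K$ and form the $(n!,1)$-cable $K^{(n!,1)}$. This is again non-fibered, and a short Seifert--Van Kampen computation (Proposition~\ref{cable-prop}) shows that any homomorphism from $G(K^{(n!,1)})$ to a group of order $\le n$ has cyclic image, hence nonzero twisted Alexander polynomial by the abelian case. Thus $\ord(K^{(n!,1)})>n$.
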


The lower bound in Theorem~\ref{thm:main-1}(i) is the best possible in the sense that there exists a knot $K$ 
whose TAV order $\ord(K)$ attains the bound. More precisely, we can 
determine the TAV order of several knots by computer-aided calculation. We adopt Rolfsen's table \cite{Rolfsen76-1} to represent a prime knot with $10$ or fewer crossings. 
Our second theorem is the following. 

\begin{theorem}\label{thm:main-2}
For any prime knot $K$ with $10$ or fewer crossings, we have 
\begin{itemize}
\item[(i)]
$\ord(K)=24$, if $K=9_{35}, 9_{46}$,
\item[(ii)]
$\ord(K)=60$, if $K=10_{67}, 10_{120}, 10_{146}$,
\item[(iii)]
$\ord(K)=96$, if $K=10_{166}$,
\item[(iv)]
$\ord(K)=120$, if $K=8_{15}, 9_{25}, 9_{39}, 9_{41}, 9_{49}, 10_{58}$,
\item[(v)]
$\ord(K)\geq126$, otherwise.
\end{itemize}
\end{theorem}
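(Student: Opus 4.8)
The plan is to prove the two directions separately: the upper bounds asserted in (i)--(iv), and the lower bounds (including (v)). For the upper bounds, for each listed knot $K$ I would start from a Wirtinger presentation of the knot group $G(K)$ and exhibit an explicit epimorphism $f\colon G(K)\twoheadrightarrow G$ onto the group $G$ named by the asserted value of $\ord(K)$ --- concretely $\mathfrak{S}_4$ (order $24$) for $9_{35}$ and $9_{46}$, the icosahedral group $\mathfrak{A}_5$ (order $60$) for $10_{67},10_{120},10_{146}$, a suitable group of order $96$ for $10_{166}$, and a group of order $120$ (such as $\mathfrak{S}_5$ or $SL(2,5)$) in the cases of (iv). Such an $f$ is found by a finite search through the low-order quotients of $G(K)$ by computer algebra. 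One then checks, by evaluating Wada's formula for the composition of $f$ with the regular representation of $G$ (tensored with the abelianization $G(K)\to\Z$), that the associated twisted Alexander polynomial is identically zero; this is a finite linear-algebra computation over $\Q[t^{\pm1}]$ of size roughly $|G|$ times the number of generators. This exhibits $G$ as a TAV group of $K$, whence $\ord(K)\leq|G|$.

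For the lower bounds I would argue as follows. By Theorem~\ref{thm:main-1}(i) we have $\ord(K)\geq 24$ for every knot, so in each case it remains to rule out a TAV quotient of $G(K)$ of order lying strictly below the asserted value. The crucial reduction is that, since $G(K)^{\mathrm{ab}}\cong\Z$, every finite quotient $G$ of $G(K)$ has $G^{\mathrm{ab}}$ cyclic; in particular $G$ is non-abelian (a one-dimensional representation factors through $\Z$ and yields a nonzero twisted Alexander polynomial), so only the comparatively short list of non-abelian finite groups with cyclic abelianization in the relevant order range has to be examined. Intersecting this list with the necessary conditions on TAV groups established in this paper trims it to an explicit finite table of candidate groups of each order up to $125$. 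For every candidate group $G$ and every knot $K$ in question I would then enumerate the epimorphisms $G(K)\twoheadrightarrow G$ up to post-composition by $\mathrm{Aut}(G)$ --- a finite computation from the presentation, and frequently empty --- and for each one compute the regular-representation twisted Alexander polynomial and confirm that it does not vanish. Combining the matching upper and lower bounds then gives the exact values in (i)--(iv).

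For (v) it suffices to verify that for every prime knot $K$ with at most $10$ crossings other than those listed in (i)--(iv), the knot group $G(K)$ has no TAV quotient of order at most $125$; together with $\ord(K)\geq24$ this yields $\ord(K)\geq126$. For the fibered knots among these the claim is immediate, since their twisted Alexander polynomials are monic, hence nonzero; for the non-fibered ones it follows from running the lower-bound procedure of the previous paragraph over all admissible groups of order at most $125$, Theorem~\ref{thm:FV} guaranteeing that $\ord(K)$ is nonetheless finite. This is the most labor-intensive part, as it must be carried out for every knot in Rolfsen's table outside the short lists above.

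The main obstacle I anticipate is the scale of the computation rather than any conceptual gap. Even after restricting to groups with cyclic abelianization, orders such as $64$, $72$, $96$, and $120$ still contribute many candidates, and for a fixed knot the number of epimorphisms onto one of these groups can be large, so the search must be organized carefully: discarding groups that are not quotients of $G(K)$ at an early stage, exploiting that the regular-representation twisted Alexander polynomial is governed by the Reidemeister torsion of the corresponding finite cover, and reusing partial computations across an $\mathrm{Aut}(G)$-orbit of epimorphisms. A subsidiary issue is the reliability of the symbolic arithmetic over $\Q[t^{\pm1}]$: nonvanishing is safest certified by specializing $t$ to several integers, and any claimed identically-zero polynomial by an exact (e.g.\ Smith normal form) computation, so that no floating-point error can intervene.
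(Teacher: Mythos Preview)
Your proposal is correct and follows essentially the same strategy as the paper: exhibit explicit epimorphisms onto finite groups of the asserted orders and verify vanishing of the regular-representation twisted Alexander polynomial for the upper bounds, and for the lower bounds run an exhaustive computer check over the non-abelian candidate groups (filtered by the TAV necessary conditions) up to the relevant order. The paper differs only in execution---it imports cases (i), (ii) and the bound $\ord(K)>61$ from \cite{MS22-1}, gives the explicit target group $S_4\ltimes\Z_2^2$ and an $8$-dimensional permutation representation for $10_{166}$ rather than the full regular one, and dispatches orders $121$--$125$ by observing that the only normally-single-generated non-abelian groups there are $D_{61}$ and $\mathrm{Dic}_{31}$, both ruled out by Proposition~\ref{pro:metabelian}.
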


Using Theorems \ref{thm:main-1} and \ref{thm:main-2}, we have the following corollary. 

\begin{corollary}\label{cor:main-3}
There are infinitely many knots $K\in\mathcal{N}$ with $\ord(K)=24,60, 96$, or $120$. In particular, there are infinitely many prime knots $K\in\mathcal{N}$ with $\ord(K)=24$. 
\end{corollary}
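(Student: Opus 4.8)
The plan is to feed the explicit knots of Theorem~\ref{thm:main-2} into the closure operations of Theorem~\ref{thm:main-1}, so that no further computation is needed. For the first assertion I would, for each value $v\in\{24,60,96,120\}$, fix one knot $K_0$ with $\ord(K_0)=v$ supplied by Theorem~\ref{thm:main-2} — say $K_0=9_{35}$ for $v=24$, $K_0=10_{67}$ for $v=60$, $K_0=10_{166}$ for $v=96$, and $K_0=8_{15}$ for $v=120$. Each such $K_0$ lies in $\mathcal{N}$ since $\ord(K_0)<+\infty$. For $n\ge1$ put $K_n=K_0\#T(2,2n+1)$, the connected sum with the $(2,2n+1)$-torus knot. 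Because $T(2,2n+1)$ is fibered, $\ord\bigl(T(2,2n+1)\bigr)=+\infty$, so Theorem~\ref{thm:main-1}(iii) gives $\ord(K_n)=\min\{v,+\infty\}=v$; and $K_n$ is non-fibered, since a connected sum is fibered only when every summand is. The $K_n$ are pairwise distinct by uniqueness of the prime decomposition of knots, so $\{K_n\}_{n\ge1}\subset\mathcal{N}$ witnesses infinitely many knots of TAV order $v$, for each of the four values.

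For the remaining assertion — infinitely many \emph{prime} knots in $\mathcal{N}$ with $\ord=24$ — the key point is that, by Theorem~\ref{thm:main-1}(i), any knot with $\ord(K)\le24$ automatically satisfies $\ord(K)=24$; hence it suffices to produce infinitely many prime knots that dominate $9_{35}$ in the sense of Theorem~\ref{thm:main-1}(iv), (v), or (vi). I would realize these as periodic knots. Choose an unknotted circle $A\subset S^3\setminus9_{35}$ with $\mathrm{lk}(9_{35},A)=1$ such that $S^3\setminus(9_{35}\cup A)$ is hyperbolic (such $A$ exists; $9_{35}$ itself is hyperbolic). For $q\ge2$, the $q$-fold cyclic cover of $S^3$ branched over the unknot $A$ is again $S^3$, and — the linking number being $1$ — the preimage of $9_{35}$ is a single knot $K^{(q)}$, which is $q$-periodic with quotient $9_{35}$. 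Theorem~\ref{thm:main-1}(v) then gives $\ord\bigl(K^{(q)}\bigr)\le\ord(9_{35})=24$, so $\ord(K^{(q)})=24$ and in particular $K^{(q)}\in\mathcal{N}$. Finally I would check that infinitely many of the $K^{(q)}$ are prime and distinct: the exterior $E_{K^{(q)}}$ is obtained by Dehn filling $E_{K^{(q)}}\setminus\widetilde A$ — a finite cyclic cover of the hyperbolic manifold $S^3\setminus(9_{35}\cup A)$, hence itself hyperbolic — along the meridian of $\widetilde A$, whose length grows with $q$; by Thurston's hyperbolic Dehn surgery theorem $E_{K^{(q)}}$ is therefore hyperbolic, so $K^{(q)}$ is prime, for all large $q$, and since $\mathrm{vol}\,E_{K^{(q)}}\to\infty$ as $q\to\infty$, these volumes take infinitely many values, so the $K^{(q)}$ are infinitely many distinct prime knots. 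Alternatively one may invoke Kawauchi's imitation theory to produce hyperbolic imitations of $9_{35}$ whose groups surject onto $G(9_{35})$ and apply Theorem~\ref{thm:main-1}(iv).

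I expect the genuine obstacle to be exactly this last step — exhibiting an \emph{infinite} family of \emph{prime} knots carrying a controlled epimorphism onto (or periodic, or proper degree-one, structure over) $G(9_{35})$. Connected sums are too coarse, as they only yield composite knots, so one needs an actual construction together with a separate primality input (hyperbolicity of the relevant complements, or an incompressibility/JSJ analysis). I would also make explicit why the prime statement is claimed only for $\ord=24$: the same scheme applied to $10_{67}$, $10_{166}$, $8_{15}$ produces prime knots $K$ with $\ord(K)\le60$ (resp. $\le96$, $\le120$), but Theorem~\ref{thm:main-1}(i) only yields $\ord(K)\ge24$, so without separately excluding all TAV groups of order strictly between $24$ and the target value one cannot upgrade this to an equality — and such an exclusion is beyond the methods available here.
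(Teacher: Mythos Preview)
Your proof is correct; both halves differ from the paper's in execution though not in spirit.

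For the first assertion the paper simply takes iterated self–connected sums $\#_n K_1$ of a single seed knot with $\ord(K_1)=v$ and applies Theorem~\ref{thm:main-1}(iii). You instead connect-sum once with a varying fibered torus knot and use $\ord(T(2,2n{+}1))=+\infty$. Both are immediate consequences of~(iii); your version makes the role of the value $+\infty$ explicit, the paper's avoids invoking fiberedness of any auxiliary knot.

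For the prime assertion the paper uses a \emph{satellite} construction rather than your \emph{periodic} one: with $L=9_{35}$ viewed as a pattern in a solid torus $E_c$ (for an unknotted $c\subset S^3\setminus L$ not bounding a disk in $E_L$ and with $L\cup c$ a prime link) and companions $P_n=\#_nP$ for a fixed hyperbolic $P$, the resulting satellites $L_n$ carry epimorphisms $G(L_n)\twoheadrightarrow G(L)$, whence $\ord(L_n)=24$ by Theorem~\ref{thm:main-1}(iv) and~(i); primality and pairwise distinctness are then quoted directly from~\cite{CS16-1}. Your branched-cover route through Theorem~\ref{thm:main-1}(v) is equally valid but trades that citation for heavier hyperbolic-geometry input (Thurston's hyperbolic Dehn surgery theorem and a volume-growth argument). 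One soft spot worth tightening: the parenthetical ``such $A$ exists; $9_{35}$ itself is hyperbolic'' is not yet a justification --- the obvious candidate, a meridian, fails since $9_{35}\cup\mu$ has an essential annulus in its complement --- though the existence claim is standard and your Kawauchi-imitation alternative bypasses it entirely. Your closing paragraph, explaining why the prime upgrade is asserted only for $\ord=24$, is a helpful remark that the paper leaves implicit.
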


In the latter half of the paper, 
we characterize a TAV group among the finite groups. 
To prove Theorem~\ref{thm:main-1}(i), we first provide a sufficient condition that the twisted Alexander polynomial is non-zero (see Proposition~\ref{pro:metabelian} for detail), using an isomorphism theorem due to Friedl and Powell~\cite{FP12-1}, which is useful to define an obstruction theory of knot concordance. 
Further, as a much stronger result, we provide a necessary and sufficient condition for a finite group to be a TAV group. 
Namely, we have the following characterization. 

\begin{theorem}\label{thm:main-4}
A finite group $G$ is a TAV group if and only if 
$G$ is normally generated by a single element and 
the commutator subgroup of $G$ is not a $p$-group. 
\end{theorem}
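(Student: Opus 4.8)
The plan is to prove the two implications separately; the forward direction is short, and the reverse one rests on an explicit construction.

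\emph{Necessity.} Suppose $G$ is a TAV group, witnessed by a knot $K$ and an epimorphism $f\colon G(K)\to G$ for which the twisted Alexander polynomial associated to the regular representation of $G$ vanishes. Since $G(K)$ is the normal closure of any meridian $\mu$, the group $G$ is the normal closure of $f(\mu)$, which is the first condition. For the second, I argue by contraposition: if $[G,G]$ is a $p$-group, then for every knot $K'$ and every epimorphism $f'\colon G(K')\to G$ the corresponding regular-representation twisted Alexander polynomial is nonzero. Writing $\pi=G(K')$, the cover of $E_{K'}$ attached to $\ker f'$ factors through the cyclic cover attached to $\ker(\pi\to G\to G^{\mathrm{ab}})$ with intermediate deck group the $p$-group $[G,G]$; equivalently, the relevant Alexander module is the first homology, with the induced $\Z$-action, of the $[G,G]$-cover of the infinite cyclic cover $\widetilde{E_{K'}}$. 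Factoring this $[G,G]$-cover as a tower of $\Z/p$-covers and using at each stage that the group algebra of $\Z/p$ over $\F_p$ is local, a reduction-modulo-$p$ and Nakayama-type argument — this is the role of the Friedl--Powell isomorphism theorem \cite{FP12-1} — forces this module to have the same $\Q(t)$-rank, namely zero, as the Alexander module of $\widetilde{E_{K'}}$ itself (which is torsion since $\D_{K'}(t)\ne0$). Hence the polynomial is nonzero; this is essentially the content of Proposition~\ref{pro:metabelian}, so $G$ is not a TAV group when $[G,G]$ is a $p$-group.

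\emph{Sufficiency: reduction.} Now assume $G$ is the normal closure of a single element $g$ and $[G,G]$ is not a $p$-group. By a classical realization result (Gonz\'{a}lez-Acu\~{n}a's characterization: a finitely presented group is a homomorphic image of a knot group precisely when it is the normal closure of one element), some knot group surjects onto $G$; the task is to produce a knot $K$ and an epimorphism $f\colon G(K)\to G$ with $f(\mu)=g$ whose regular-representation twisted Alexander polynomial is zero. As in the necessity argument, this polynomial vanishes if and only if the $[G,G]$-cover of $\widetilde{E_K}$ — the cover of $\widetilde{E_K}$ determined by the surjection $\pi_1(\widetilde{E_K})=[G(K),G(K)]\twoheadrightarrow[G,G]$ induced by $f$ — has infinite-dimensional rational first homology. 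This is impossible for $\widetilde{E_K}$ itself, where $H_1(\widetilde{E_K};\Q)$ is finite-dimensional because $\D_K(t)\ne0$, but it can occur for a finite cover of it; so the problem reduces to constructing such a pair $(K,f)$.

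\emph{Sufficiency: construction.} One natural approach is to build $K$ together with a free Seifert surface $\Sigma$, so that $H:=E_K\setminus\Sigma$ is a handlebody and $\widetilde{E_K}$ is an infinite union $\cdots\cup H_{-1}\cup H_0\cup H_1\cup\cdots$ of copies of $H$ glued along copies of $\Sigma$. One then arranges, simultaneously: (i) an epimorphism $f\colon G(K)\to G$ with $f(\mu)=g$; (ii) enough control on the images $f(\pi_1(\Sigma_n))\subseteq[G,G]$ that the lifts of the $\Sigma_n$ to the $[G,G]$-cover $Y$ of $\widetilde{E_K}$ are explicit; and (iii) that the finite-dimensional ``Seifert pairing with coefficients in the group algebra $\Q[[G,G]]$'', which governs the Milnor/Mayer--Vietoris computation of $H_1(Y;\Q)$ from the pieces covering the $H_n$ and $\Sigma_n$, becomes degenerate after extension of scalars to $\Q(t)\otimes_\Q\Q[[G,G]]$. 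It is in (iii) that the hypothesis on $[G,G]$ enters: the semisimple algebra $\Q[[G,G]]$ has a nontrivial simple factor on which the pairing can be forced to degenerate, whereas if $[G,G]$ were a $p$-group the local-ring argument of the necessity part would preclude this. Once (i)--(iii) hold, $b_1(Y;\Q)=\infty$, the regular-representation twisted Alexander polynomial of $(K,f)$ vanishes, and $G$ is a TAV group.

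\emph{The main obstacle.} The crux is carrying out this construction for an \emph{arbitrary} such $G$ at once, notably when $[G,G]$ is nonabelian, or even perfect (e.g.\ $G=\mathfrak{A}_5$), so that there is no reduction to a small abelian cover of $\widetilde{E_K}$. From a presentation of $G$ as the normal closure of one element one must produce a knot diagram and Seifert surface realizing (i)--(iii) simultaneously; arranging the surjectivity in (i) and the control in (ii) is comparatively routine, but coupling them to the degeneracy in (iii) — and then verifying rigorously that the resulting cover $Y$ has infinite rational first Betti number, equivalently that the associated Reidemeister-torsion invariant is zero — is the technical heart of the proof. The remainder is bookkeeping with group presentations and Mayer--Vietoris sequences.
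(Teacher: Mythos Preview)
Your necessity argument is sound in outline and, interestingly, differs from the paper's. You invoke Proposition~\ref{pro:metabelian}, but that proposition is stated only for \emph{metabelian} $H$; what you are really claiming is that its proof (via Lemma~\ref{lem:FP4.1} and Shapiro's lemma) goes through verbatim whenever $[G,G]$ is a $p$-group, abelian or not. This is correct: the only place metabelianness was used was in writing $H=\Z_k\ltimes[H,H]$, and all that is actually needed is that $f$ carries $\pi_1(Y_\varphi)=\ker(\phi\bmod k)$ into $[G,G]$, which follows from $G/[G,G]$ being cyclic. The paper's own necessity proof is different: it passes through the ideal-theoretic Lemma~\ref{pgp-lem} and Gruenberg's theorem that $\bigcap_i I_{G_0}^i=0$ for a finite $p$-group $G_0$.

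The sufficiency direction, however, is a genuine gap. What you have written is a programme, not a proof, and you say as much: you outline a free-Seifert-surface construction and a Mayer--Vietoris scheme, then concede that ``coupling [surjectivity and coset control] to the degeneracy in (iii)\dots is the technical heart,'' without supplying that heart. The heuristic that ``$\Q[[G,G]]$ has a nontrivial simple factor on which the pairing can be forced to degenerate'' does not actually isolate the hypothesis --- $\Q[[G,G]]$ is semisimple for \emph{every} finite group in characteristic zero --- and no mechanism is given for realising the desired degeneracy by an actual knot while keeping $f$ surjective onto $G$.

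The paper sidesteps explicit knot constructions entirely. It first proves a lifting criterion (Theorem~\ref{lifting-thm}): $\Delta_K^{\rho\circ f}(t)=0$ iff $f\times\phi$ lifts nontrivially to $\Z[G\times\Z]\rtimes(G\times\Z)$. Combined with the Gonz\'alez-Acu\~na--Johnson realisation theorem, this reduces (Proposition~\ref{ideal-lem}) the existence of a TAV witness to a purely ring-theoretic question: does $\Z[\tilde G]$, where $\tilde G=\langle[G,G]\times\{0\},\,(g_0,1)\rangle\subset G\times\Z$, contain a finitely generated nonzero left ideal $I$ with $I_{\tilde G}I=I$? When $[G,G]$ is not a $p$-group, a minimal non-prime-power-order subgroup $H\subset[G,G]$ is either cyclic of order $pq$ or metabelian of order $p^nq$ (Gruenberg), and in each case Gruenberg's computation of $\bigcap_i I_H^i$ supplies an ideal $J\subset\Z[H]$ with $I_HJ=J$; extending scalars to $\Z[\tilde G]$ gives the required $I$. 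No Seifert surface is ever built.
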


The above theorem follows from a more general statement which relates the vanishing of the twisted Alexander polynomial to a lifting problem of certain group homomorphisms (see Theorem~\ref{lifting-thm} for detail). To the best of our knowledge, these results provide new characterizations of the vanishing of the twisted Alexander polynomial. We remark that the assertion of Proposition~\ref{pro:metabelian} is contained in Theorem~\ref{thm:main-4}, but it is sufficient to prove Theorem~\ref{thm:main-1}(i), and moreover, their proofs are different in nature. 

This paper is organized as follows. 
In Section 2, 
we quickly recall some necessary material for our purpose, 
namely, 
the definition of twisted Alexander polynomials, 
basic properties of representations of finite groups, 
an isomorphism theorem due to Friedl and Powell, 
and group homomorphisms of the knot group of a cable knot in the $3$-sphere. 
In Section 3, 
we prove Theorems~\ref{thm:main-1} and \ref{thm:main-2}, and  Corollary~\ref{cor:main-3}. 
In Section 4, we consider the lifting criteria of a homomorphism from the fundamental group of a connected finite CW complex to a product group, and prove Theorem~\ref{thm:main-4}. 
In Section 5, we provide the upper bound of the TAV 
order of several knots explicitly by computer-aided calculation, 
and discuss related problems on the TAV order and 
twisted Alexander polynomials associated to representations of finite groups.

\section{Preliminaries}
\subsection{Twisted Alexander polynomials}\label{sub:tap}

Let $X$ be a connected finite CW complex, 
$\phi\in H^1(X;\Z)=\mathrm{Hom}(\pi_1(X),\Z)$, 
and $\rho\colon\pi_1(X)\to \mathrm{GL}(n, R)$ a homomorphism to a general linear group over a Noetherian unique factorization domain $R$. 
Define a right $\Z[\pi_1(X)]$-module structure on 
$R^n\otimes_\Z\Z[t^{\pm1}]=R[t^{\pm1}]^n$ as follows: 
$$
(v\otimes p)\cdot g=(v\cdot \rho(g))\otimes(p\cdot t^{\phi(g)}),
$$
where 
$g\in \pi_1(X)$ and $v\otimes p\in R^n\otimes_\Z\Z[t^{\pm1}]$. 
Here, we view $R^n$ as row vectors. 
Taking tensor product, we obtain a homomorphism 
$\rho\otimes\phi\colon\pi_1(X)\to \mathrm{GL}(n,R[t^{\pm1}])$. 

We denote by $\tilde{X}$ the universal covering of $X$, 
and use the homomorphism 
$\rho\otimes\phi$ to regard 
$R[t^{\pm1}]^n$ as a right $\Z[\pi_1(X)]$-module. 
The chain complex $C_*(\tilde{X})$ is a left 
$\Z[\pi_1(X)]$-module via deck transformations. 
We can therefore consider the tensor products
$$
C_*(X;R[t^{\pm1}]^n)
:=R[t^{\pm1}]^n\otimes_{\Z[\pi_1(X)]}C_*(\tilde{X}),
$$ 
which form a chain complex of $R[t^{\pm1}]$-modules. 
We then consider the $R[t^{\pm1}]$-modules 
$H_*(X;R[t^{\pm1}]^n)
:=H_*(C_*(X;R[t^{\pm1}]^n))$. 

Since $X$ is compact and $R[t^{\pm1}]$ is Noetherian, 
these modules are finitely presented over $R[t^{\pm1}]$. 
We define the \textit{twisted Alexander polynomial} of $(X,\phi,\rho)$ 
to be the order of $H_1(X;R[t^{\pm1}]^n)$ 
as a left $R[t^{\pm1}]$-module. 
We will denote it as $\D_{X,\phi}^\rho(t)\in R[t^{\pm1}]$, 
and note that $\D_{X,\phi}^\rho(t)$ is well defined up to multiplication by a unit in $R[t^{\pm1}]$. See \cite{FV11-1} for other basic properties of twisted Alexander polynomials.

\subsection{Representations of finite groups}
We quickly recall basic properties of representations of finite groups according to 
Fulton-Harris~\cite{FH}. 

A \textit{representation}\, of a finite group $G$
 on a finite-dimensional complex vector space $V$ is a homomorphism $\rho \colon G\to \mathrm{GL}(V)$ 
 of $G$ to the group of automorphisms of $V$. Such a map gives $V$ the structure of a $G$-module. 
 We also call $V$ itself a representation of $G$. 
 A representation $V$ is called \textit{irreducible} 
 if there is no proper non-zero invariant 
 subspace of $V$. 
 
In this paper,  
we will need the following well known properties of representations of finite groups.

\begin{lemma}\label{lem:abelian}
Every irreducible representation $V$ of a finite abelian group $G$ is one-dimensional.
\end{lemma}

For a finite group $G$, let $V$ be a complex vector space of dimension $|G|$, where $|G|$ denotes the order of $G$, with a basis 
$\{e_g\,|\, g\in G\}$. For $h\in G$, let $\rho(h)$ be the linear map of $V$ into $V$ which sends $e_g$ to $e_{gh}$; this defines a linear representation, 
which is called the (right) \textit{regular representation} of $G$.


\begin{lemma}\label{lem:regular-rep}
The regular representation $\rho\colon G \to \mathrm{GL}(V)$ 
is equivalent to a direct sum $\bigoplus_{i=1}^k\rho_i^{\oplus \dim V_i}$ 
where $\rho_i\colon G\to \mathrm{GL}(V_i)~(1\leq i\leq k)$ are 
the irreducible representations of $G$. 
\end{lemma}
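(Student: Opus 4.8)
The plan is to deduce the decomposition of the regular representation $\rho$ from the standard representation theory of finite groups over $\C$, which is available since we are working over an algebraically closed field of characteristic zero. First I would recall that $\C[G]$ is a semisimple algebra (Maschke's theorem), so the regular representation $V$, which as a $G$-module is just $\C[G]$ acting on itself by right multiplication, decomposes as a direct sum of irreducibles $V\cong\bigoplus_{i=1}^k \rho_i^{\oplus m_i}$, where $\rho_1,\dots,\rho_k$ is a complete list of the irreducible representations of $G$ and $m_i\geq 0$ is the multiplicity of $\rho_i$ in $V$. It remains to identify $m_i=\dim V_i$.

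The key step is the computation of the multiplicities via characters. Writing $\chi_V$ for the character of the regular representation, one checks directly on the basis $\{e_g\mid g\in G\}$ that $\rho(h)$ fixes no basis vector unless $h=1$ (since $e_g\mapsto e_{gh}$), so $\chi_V(1)=|G|$ and $\chi_V(h)=0$ for $h\neq 1$. Then, using the orthogonality of irreducible characters, the multiplicity of $\rho_i$ in $V$ is
\[
m_i=\langle \chi_V,\chi_i\rangle=\frac{1}{|G|}\sum_{g\in G}\chi_V(g)\overline{\chi_i(g)}=\frac{1}{|G|}\cdot|G|\cdot\overline{\chi_i(1)}=\dim V_i,
\]
which is exactly the claimed formula. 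Putting this together with the semisimple decomposition from the previous step yields $\rho\cong\bigoplus_{i=1}^k\rho_i^{\oplus\dim V_i}$, as asserted.

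This is a textbook fact, so rather than reproving the character orthogonality relations from scratch I would simply cite Fulton--Harris~\cite{FH}, in the same spirit as the surrounding exposition. The only point requiring a small explicit verification is the character value $\chi_V(h)=0$ for $h\neq 1$, which I would spell out in one line as above. I do not anticipate a genuine obstacle here; the statement is included for completeness because it is used later when analyzing twisted Alexander polynomials associated to regular representations, and the proof is essentially a pointer to the standard reference together with the one-line character computation.
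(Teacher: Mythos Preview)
Your argument is correct and is precisely the standard proof found in Fulton--Harris. The paper itself does not supply a proof of this lemma at all: it lists it among ``well known properties of representations of finite groups'' quoted from \cite{FH}, so there is nothing to compare against beyond the fact that your character computation is exactly the one in that reference.
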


\begin{lemma}\label{lem:FH3.14}
Let $H$ be a subgroup of a finite group $G$. Then, 
the regular representation of $G$ is induced from the regular representation of $H$. 
\end{lemma}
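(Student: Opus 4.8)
The plan is to prove Lemma~\ref{lem:FH3.14} using Frobenius reciprocity and the structure of induced representations, following the standard treatment in Fulton--Harris~\cite{FH}. The essential point is that inducing the regular representation of a subgroup $H$ up to $G$ produces a $G$-module whose underlying vector space has dimension $[G:H]\cdot|H| = |G|$, which already matches the dimension of the regular representation of $G$; so the content of the lemma is that the $G$-action is the \emph{right} one, not merely that the dimensions agree.

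First I would recall the definition of the induced representation: for a subgroup $H \le G$ and an $H$-module $W$, one sets $\mathrm{Ind}_H^G W = \Z[G]\otimes_{\Z[H]} W$ (or its complex analogue $\C[G]\otimes_{\C[H]}W$), with $G$ acting on the left tensor factor. Taking $W$ to be the regular representation $\C[H]$ of $H$, we get $\mathrm{Ind}_H^G \C[H] = \C[G]\otimes_{\C[H]}\C[H] \cong \C[G]$ as $\C[G]$-modules, where the last isomorphism is the canonical one $g\otimes h \mapsto gh$. This is a purely formal computation with the tensor product over the group ring, and it directly exhibits the induced module as the regular representation $\C[G]$ of $G$. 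A coset-representative argument gives the same conclusion more concretely: choosing coset representatives $g_1,\dots,g_r$ for $G/H$, the underlying space of $\mathrm{Ind}_H^G\C[H]$ is $\bigoplus_i g_i\otimes\C[H]$, and the elements $g_i\otimes h$ form a basis indexed by $G$ on which $G$ permutes by right-regular-type action after identifying $g_i\otimes h$ with $g_ih$.

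There is essentially no serious obstacle here; the only care needed is bookkeeping about left versus right actions, since the paper has chosen to work with the \emph{right} regular representation (the map sending $e_g$ to $e_{gh}$) and with right $\Z[\pi_1(X)]$-module structures in the twisted Alexander setup. So I would phrase the isomorphism $\mathrm{Ind}_H^G\C[H]\cong\C[G]$ in a way compatible with that convention, or equivalently note that left and right regular representations of a finite group are equivalent (via $g\mapsto g^{-1}$), so the distinction is immaterial for the statement. One could alternatively cite \cite[\S 3.3]{FH} directly, where this appears as a standard example, but including the one-line tensor-product argument makes the paper self-contained.

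In summary, the proof reduces to the natural isomorphism of $\C[G]$-modules
\begin{equation*}
\mathrm{Ind}_H^G\C[H] \;=\; \C[G]\otimes_{\C[H]}\C[H] \;\cong\; \C[G],
\end{equation*}
together with the observation that $\C[G]$, with its $G$-action by (left or right) multiplication, is precisely the regular representation of $G$. The main step to write out carefully is verifying that this isomorphism is $G$-equivariant, which is immediate from associativity of multiplication in $G$.
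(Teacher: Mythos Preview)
Your argument is correct: the isomorphism $\mathrm{Ind}_H^G\C[H]=\C[G]\otimes_{\C[H]}\C[H]\cong\C[G]$ is the standard one-line proof, and your remarks about left versus right conventions are appropriate given the paper's setup. Note that the paper itself does not prove this lemma; it is listed among ``well known properties of representations of finite groups'' and attributed to Fulton--Harris~\cite{FH} (indeed the label \texttt{lem:FH3.14} points to Example~3.14 there), so there is no proof in the paper to compare against---your proposal simply supplies the omitted standard argument.
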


Throughout this paper, let us consider a knot $K$ in the $3$-sphere $S^3$ 
and let $E_K=S^3\setminus \nu(K)$, where $\nu(K)$ denotes an open tubular neighborhood of $K$. 
We denote $\pi_1(E_K)$ by $G(K)$, 
and call it the \textit{knot group} of $K$. 
If $f\colon\G\to G$ is an epimorphism to a finite group $G$, 
then we get the representation 
$$
G(K)\overset{f}{\longrightarrow} G
\overset{\rho}{\longrightarrow} \mathrm{Aut}_\Z(\Z[G]),
$$ 
where the second map is given by the right multiplication. 
We can also identify $\mathrm{Aut}_\Z(\Z[G])$ with 
$\mathrm{GL}(|G|,\Z)$, and 
obtain the corresponding twisted Alexander polynomial 
$\D_{E_K,\phi}^{\rho\circ f}(t)$. 
For the \textit{abelianization} homomorphism 
$\phi \colon G(K)\to H_1(E_K;\Z)\cong\Z$, 
we drop $\phi$ from the notation and use $\D_K^{\rho\circ f}(t)$ for simplicity. 

\begin{remark}\label{rmk:torsion}
It is known that 
$\D_{K}^{\rho\circ f}(t)\not=0$ if and only if 
$H_1(E_K;\Q[G][t^{\pm1}])=H_1(E_K;\Q[t^{\pm1}]^{|G|})$ is $\Q[t^{\pm1}]$-torsion, 
namely, 
$\mathrm{rank}_\Z\,H_1(E_K;\Z[t^{\pm1}]^{|G|})$ is finite 
(see Turaev~\cite[Remark~4.5]{Turaev01-1}). 
\end{remark}

The following proposition is very useful for our purpose (see \cite[Proposition~2.5]{MS22-1}). 

\begin{proposition}\label{pro:brunching-rule}
For the (regular) representation $\rho\circ f\colon\G\to G\to \mathrm{GL}(|G|,\Z) \subset \mathrm{GL}(|G|,\C)$, 
$$
\D_{K}^{\rho\circ f}(t)
=\prod_{i=1}^k\left(\D_{K}^{\rho_i\circ f}(t)\right)^{\dim\,\rho_i}
$$
holds, 
where each $\rho_i$ is the irreducible representation of $G$ appeared in Lemma \ref{lem:regular-rep}. 
\end{proposition}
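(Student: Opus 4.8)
The plan is to deduce the formula from two standard properties of the twisted Alexander polynomial, computed over the principal ideal domain $\C[t^{\pm1}]$: that it is unchanged, up to a unit, under conjugating the representation by a constant matrix, and that it is multiplicative under direct sums of representations. Once these are in hand, the proposition follows immediately from the decomposition of the regular representation recorded in Lemma~\ref{lem:regular-rep}.

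First I would isolate the two properties. For a homomorphism $\sigma\colon\G\to\mathrm{GL}(n,\C)$ and the abelianization $\phi$, the complex $C_*(E_K;\C[t^{\pm1}]^n)=\C[t^{\pm1}]^n\otimes_{\Z[\G]}C_*(\widetilde{E_K})$ is built from the module $\C[t^{\pm1}]^n$ equipped with the right $\Z[\G]$-action $w\cdot g=w\,\sigma(g)\,t^{\phi(g)}$; since $t^{\phi(g)}$ acts as a scalar, one checks directly that \textbf{(a)} if $\sigma'=P\sigma P^{-1}$ with $P\in\mathrm{GL}(n,\C)$ constant, then $w\mapsto wP^{-1}$ is an isomorphism of right $\Z[\G]$-modules from the $\sigma$-module to the $\sigma'$-module, hence induces a $\C[t^{\pm1}]$-linear isomorphism of chain complexes, so $\D_K^{\sigma}(t)$ and $\D_K^{\sigma'}(t)$ agree up to a unit; and \textbf{(b)} if $\sigma$ is block-diagonal with blocks $\alpha$ and $\beta$, then the $\sigma$-module is the direct sum of the $\alpha$- and $\beta$-modules, so $C_*(E_K;\C[t^{\pm1}]^n)$ splits as a direct sum of chain complexes and $H_1(E_K;\C[t^{\pm1}]^n)\cong H_1(E_K;\C[t^{\pm1}]^{n_\alpha})\oplus H_1(E_K;\C[t^{\pm1}]^{n_\beta})$. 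Because the order of a finitely generated module over the PID $\C[t^{\pm1}]$ is multiplicative on direct sums (with the convention that a module of positive rank has order $0$), property (b) gives $\D_K^{\alpha\oplus\beta}(t)\doteq\D_K^{\alpha}(t)\,\D_K^{\beta}(t)$, and by iteration $\D_K^{\bigoplus_i\sigma_i^{\oplus m_i}}(t)\doteq\prod_i\bigl(\D_K^{\sigma_i}(t)\bigr)^{m_i}$.

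Next I would apply this to the representation at hand. By Lemma~\ref{lem:regular-rep} there is a constant matrix $P\in\mathrm{GL}(|G|,\C)$ conjugating the regular representation $\rho\colon G\to\mathrm{GL}(|G|,\C)$ to $\bigoplus_{i=1}^k\rho_i^{\oplus\dim V_i}$; precomposing with the epimorphism $f$, the same $P$ conjugates $\rho\circ f$ to $\bigoplus_{i=1}^k(\rho_i\circ f)^{\oplus\dim V_i}$. Combining (a) and (b), and using $\dim V_i=\dim\rho_i$, one obtains
$$
\D_K^{\rho\circ f}(t)\doteq\prod_{i=1}^k\bigl(\D_K^{\rho_i\circ f}(t)\bigr)^{\dim\rho_i},
$$
which is the assertion.

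The only point that calls for care is purely formal, and I expect it to be the main (if modest) obstacle: in Section~\ref{sub:tap} the polynomial $\D_K^{\rho\circ f}(t)$ associated to the regular representation is taken over $\Z[t^{\pm1}]$, while the computation above runs over $\C[t^{\pm1}]$, so one must check these agree up to a unit of $\C[t^{\pm1}]$. This holds because a presentation matrix over $\Z[t^{\pm1}]$ of $H_1(E_K;\Z[t^{\pm1}]^{|G|})$ is also a presentation matrix over $\C[t^{\pm1}]$ of $H_1(E_K;\C[t^{\pm1}]^{|G|})$ (the extension $\Z\subset\C$ being flat), and passing from $\Z[t^{\pm1}]$ to $\C[t^{\pm1}]$ changes the greatest common divisor of its maximal minors only by the content, a nonzero complex scalar and hence a unit of $\C[t^{\pm1}]$. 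With this identification made, the displayed equality is meaningful and the argument above completes the proof; beyond it the content is simply Lemma~\ref{lem:regular-rep} together with the elementary homological facts (a) and (b).
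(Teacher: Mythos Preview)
Your argument is correct. The paper does not reproduce a proof of this proposition at all: it simply cites \cite[Proposition~2.5]{MS22-1}, so there is no in-paper proof to compare against. Your approach---conjugation invariance, multiplicativity under direct sums over the PID $\C[t^{\pm1}]$, then invoking Lemma~\ref{lem:regular-rep}---is exactly the standard one and is almost certainly what appears in \cite{MS22-1}; the care you take in reconciling the $\Z[t^{\pm1}]$ and $\C[t^{\pm1}]$ definitions via flatness is appropriate and suffices.
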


\begin{example}\label{ex:cyclic}
Let $G$ be a cyclic group of order $n$. 
The regular representation $\rho \colon G\to\mathrm{GL}(n,\Z) \subset \mathrm{GL}(n,\C)$ can be decomposed 
into one-dimensional irreducible representations $\rho_1, \rho_2, \ldots, \rho_n$, 
where $\rho_j \colon G\to \mathrm{GL}(1,\C)$ is determined by $\rho_j(g) = \alpha^j$ 
for a generator $g$ of $G$ and a primitive $n$-th root $\alpha\in\C$ of unity.  
It is easy to see that $\D_K^{\rho_j\circ f}(t) = \D_K(\alpha^j t)$.
Then for an epimorphism $f \colon G(K)\to G$, we have $\D_K^{\rho\circ f}(t)=\prod_{j=1}^{n}\D_K(\alpha^j t)$.
In particular, $\D_K^{\rho\circ f}(t)\not=0$. 
\end{example}

Using Proposition~\ref{pro:brunching-rule}, 
if we can find an irreducible representation $\rho_i$ of $G$ such that 
$\D_{K}^{\rho_i\circ f}(t)=0$, then 
we have an explicit example of a TAV group.

\subsection{Non-vanishing of twisted Alexander polynomials}

The following isomorphism lemma 
is the special case of ~\cite[Proposition~4.1]{FP12-1}. We denote a cyclic group $\Z/k\Z$ by $\Z_k$. Given a prime number $p$, a finite group $G$ is a \textit{$p$-group} if and only if the order $|G|$ is a power of $p$. 

\begin{lemma}\label{lem:FP4.1}
Let $p$ be a prime number. 
Suppose that $S,Y$ are finite CW-complexes such that 
there is a map $i\colon S\to Y$ which induces an isomorphism 
$i_*\colon H_*(S;\Z_p){\rightarrow} $ $H_*(Y;\Z_p)$, which, for example, is always the case if $i$ induces a $\Z$-homology equivalence. Let $\phi\colon\pi_1(Y)\to \Z=\la t\ra$ and $\varphi\colon\Z\to \Z_k$ be 
epimorphisms, and $Y_\varphi$ the induced covering of $Y$. 
Let $\phi'\colon\pi_1(Y_\varphi)\to \Z$ be the restriction of $\phi$, 
and let $\rho'\colon\pi_1(Y_\varphi)\to \mathrm{GL}(d,\Q)$ be 
a $d$-dimensional representation, such that $\rho'$ restricted to 
the kernel of $\phi'$ factors through a $p$-group. 
Define $S_\varphi$ to be the pull-back covering $S_\varphi:=i^*(Y_\varphi)$. 
Then 
$$
i_*\colon H_*(S_\varphi;\Q(t)^d)\overset{}{\rightarrow}
H_*(Y_\varphi;\Q(t)^d)
$$
is an isomorphism.
\end{lemma}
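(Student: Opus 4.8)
The statement to be proved is that $i$ induces an isomorphism on twisted homology with $\mathbb{Q}(t)^d$-coefficients, where the coefficient module carries the $\pi_1(Y_\varphi)$-action $\rho'\otimes\phi'$ on the target and its restriction along $\pi_1(S_\varphi)\to\pi_1(Y_\varphi)$ on the source. My plan is first to replace this by the vanishing of the relative twisted homology $H_*(Y_\varphi,S_\varphi;\mathbb{Q}(t)^d)$ (equivalently, of the twisted homology of the mapping cone of $i$). Since $\mathbb{Q}(t)$ is a localization of $\mathbb{Q}[t^{\pm1}]$, hence flat over it, this homology equals $H_*(Y_\varphi,S_\varphi;\mathbb{Q}[t^{\pm1}]^d)\otimes_{\mathbb{Q}[t^{\pm1}]}\mathbb{Q}(t)$, so it is enough to show that the finitely generated $\mathbb{Q}[t^{\pm1}]$-modules $H_*(Y_\varphi,S_\varphi;\mathbb{Q}[t^{\pm1}]^d)$ are torsion. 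The hypothesis on $\rho'$ along $\ker\phi'$ is what must force this, and the ``$t$-direction'' coming from $\pi_1(Y_\varphi)/\ker\phi'\cong\mathbb{Z}$ is exactly why one can only expect torsion and not outright vanishing of the $\mathbb{Q}[t^{\pm1}]$-homology.

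The engine of the argument, and where $p$-groups enter, is the following principle: if $j\colon S'\to Y'$ is a $\mathbb{Z}_p$-homology equivalence of finite CW complexes and $N$ is any $\mathbb{Z}_p[\pi_1(Y')]$-module on which $\pi_1(Y')$ acts through a finite $p$-group $P$, then $H_*(Y',S';N)=0$. Indeed, the relative integral chain complex, regarded over $\mathbb{Z}[\pi_1(Y')]$, becomes a bounded acyclic complex of finitely generated free $\mathbb{Z}_p$-modules after $\otimes_{\mathbb{Z}[\pi_1(Y')]}\mathbb{Z}_p$, hence is $\mathbb{Z}_p$-chain contractible and kills any trivially-acted-on $\mathbb{Z}_p$-module; and $\mathbb{F}_p[P]$ is a local ring whose only simple module is the trivial one, so $N$ admits a finite filtration with trivially-acted-on subquotients, and one finishes by the long exact sequences of the pair. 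Moreover this survives pullback under finite regular covers: $Y_\varphi\to Y$ is such a cover, so by Shapiro's lemma and the freeness of group rings over $\mathbb{Z}_p$ the map $i\colon S_\varphi\to Y_\varphi$ is again a $\mathbb{Z}_p$-homology equivalence, and likewise for any further finite regular cover, so the principle applies verbatim on $(Y_\varphi,S_\varphi)$.

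The remaining and genuinely delicate task is to bring $H_*(Y_\varphi,S_\varphi;\mathbb{Q}[t^{\pm1}]^d)$ into the scope of this engine. Since $\rho'(\ker\phi')$ is a finite $p$-group, I would choose a $\rho'(\ker\phi')$-invariant $\mathbb{Z}$-lattice in $\mathbb{Q}^d$ and, after a conjugation that does not affect the conclusion, assume $\rho'|_{\ker\phi'}$ is valued in $\mathrm{GL}(d,\mathbb{Z})$; then, replacing $\rho'$ by the representation induced from its restriction to the preimage of a suitable finite-index subgroup of $\pi_1(Y_\varphi)$ — a representation of which $\rho'$ is a $\mathbb{Q}[\pi_1(Y_\varphi)]$-direct summand whose complement still restricts to a $p$-group along $\ker\phi'$ — and tracking $\phi'$ via the projection formula and Shapiro's lemma, one reduces the torsion statement to a representation that genuinely factors, up to induction, through a $p$-group, where the engine above together with flatness once more closes the argument. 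The main obstacle is precisely this reduction: making the $\mathbb{Q}[t^{\pm1}]$-coefficient module $p$-integral compatibly with all of $\rho'\otimes\phi'$ — the image of a lift of a generator of $\pi_1(Y_\varphi)/\ker\phi'$ carries no a priori $p$-integrality — so its denominators must be absorbed into a change of the variable $t$ and of lattice, or handled by $\mathbb{Z}_{(p)}$-coefficient homological algebra with Bockstein bookkeeping. I would model this final step closely on the proof of \cite[Proposition~4.1]{FP12-1}.
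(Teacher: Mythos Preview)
The paper does not supply its own proof of this lemma: it is quoted as a special case of \cite[Proposition~4.1]{FP12-1}, with the argument entirely delegated to that reference. So there is no in-paper proof to compare against, and your sketch and the paper ultimately point to the same external source for the substance.

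Your overall architecture --- pass to the mapping cone, use flatness of $\Q(t)$ over $\Q[t^{\pm1}]$ to reduce to a torsion statement, and drive everything with the Strebel-type fact that a $\Z_p$-acyclic pair has vanishing relative homology for any $\Z_p$-local system whose monodromy is a $p$-group --- is the right shape, and you correctly flag the integrality step as the crux and defer it to Friedl--Powell. One intermediate claim, however, is wrong: the lifted map $S_\varphi\to Y_\varphi$ is \emph{not} in general a $\Z_p$-homology equivalence. Shapiro gives $H_*(Y_\varphi,S_\varphi;\Z_p)\cong H_*(Y,S;\Z_p[\Z_k])$, but the $\pi_1(Y)$-action on $\Z_p[\Z_k]$ is the regular representation of $\Z_k$, which does not factor through a $p$-group unless $k$ is a $p$-power; ``freeness of the group ring over $\Z_p$'' is irrelevant, since twisted homology sees the module structure, not the underlying vector space. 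Concretely, in the paper's own application take $Y=E_K$ for $K$ the trefoil, $S$ a meridian circle, $k=2$, $p=3$: the double cyclic cover has $H_1(Y_\varphi;\Z)\cong\Z\oplus\Z_3$, hence $H_1(Y_\varphi;\Z_3)\cong\Z_3^2$, while $S_\varphi\simeq S^1$ has $H_1(S_\varphi;\Z_3)\cong\Z_3$. So your engine is not available on $(Y_\varphi,S_\varphi)$ as you assert.

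The repair is to run the argument on the base pair $(Y,S)$ --- where the $\Z_p$-equivalence \emph{is} the hypothesis --- by first transporting the coefficient system on $Y_\varphi$ down to $Y$ via Shapiro (this is precisely the content of the Claim inside the paper's proof of Proposition~\ref{pro:metabelian}). The induced $\pi_1(Y)$-module, restricted to $\ker\phi=\ker\phi'$, is a direct sum of conjugates of $\rho'|_{\ker\phi'}$ and therefore still factors through a $p$-group, so your engine applies there; the delicate $p$-integrality bookkeeping you describe then takes place on $(Y,S)$, which is how Friedl--Powell organize it.
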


Here we refer to \cite[Section 2.1]{FK06-1} for the twisted homology group of a disconnected space. 

Using Lemma~\ref{lem:FP4.1}, we can provide a sufficient condition that the twisted Alexander polynomial is nonzero. 
Before that, we review some basic facts about homomorphisms of the knot group onto a finite group (see \cite{BZH14-1}). It is known that the knot group $G(K)$ can be written as a semi-direct product $\Z\ltimes [G(K),G(K)]$ where $\Z$ is generated by an element $t$, which is a meridian of the knot. It is also known that for a group $G$ there are a knot $K$ and an epimorphism $f \colon \G\to G$ if and only if 
$G$ is finitely generated and $G=\la g^G\ra$, that is, normally generated by a single element $g\in G$ 
(see \cite{GA75-1}, \cite{Johnson80-1}). 
For example, there is no epimorphism of $\G$ onto the dihedral group $D_{2n}=\Z_2\ltimes\Z_{2n}$. 

A finite group $H$ is called \textit{metabelian} 
if $[H,H]$ is abelian. If $H$ is a metabelian factor group of $G(K)$, then $H$ can be written as $\Z_k\ltimes[H,H]$, and $[H,H]$ is a factor module of the first homology group of the $k$-fold cyclic branched covering of $K$. In particular, a meridian of $K$ is mapped to a generator of $\Z_k$.


\begin{proposition}\label{pro:metabelian}
Let $H$ be a finite metabelian group normally generated by a single element such that $[H,H]$ is a $p$-group. Then, for any epimorphism $f \colon G(K)\to H$ and the regular representation 
$\rho\colon H\to\mathrm{Aut}_{\Q}(\Q[H])$, we have $\D_K^{\rho\circ f}(t)\not=0$. Namely, $H$ is never a TAV group. 
\end{proposition}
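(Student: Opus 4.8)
The plan is to reduce the non-vanishing of $\D_K^{\rho\circ f}(t)$ to an application of the Friedl--Powell isomorphism lemma (Lemma~\ref{lem:FP4.1}), comparing the knot exterior $E_K$ with a simpler reference space whose relevant twisted homology is manifestly torsion. First I would use the structure of a metabelian factor group recalled just before the statement: since $H$ is a metabelian factor group of $G(K)$, we may write $H=\Z_k\ltimes[H,H]$, with a meridian of $K$ mapping to a generator of $\Z_k$, and $[H,H]$ is a $p$-group by hypothesis. By Proposition~\ref{pro:brunching-rule} it suffices to show that each irreducible constituent $\rho_i$ of the regular representation of $H$ gives $\D_K^{\rho_i\circ f}(t)\neq 0$; equivalently, by Remark~\ref{rmk:torsion}, that $H_1(E_K;\Q[H][t^{\pm1}])$ is $\Q[t^{\pm1}]$-torsion. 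So the real task is a single torsion statement about the $\Q[H]$-twisted Alexander module of $E_K$.

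Next I would set up Lemma~\ref{lem:FP4.1} with $Y=E_K$, $\phi\colon\pi_1(E_K)\to\Z=\la t\ra$ the abelianization, and $\varphi\colon\Z\to\Z_k$ the quotient corresponding to the $\Z_k$ in $H=\Z_k\ltimes[H,H]$; then $Y_\varphi$ is the $k$-fold cyclic cover of $E_K$. The representation $\rho'$ will be the restriction to $\pi_1(Y_\varphi)$ of $\rho\circ f$ (or of a given irreducible constituent); the point is that the kernel of $\phi'\colon\pi_1(Y_\varphi)\to\Z$ maps, under $f$, into $[H,H]$, which is a $p$-group, so $\rho'$ restricted to $\ker\phi'$ factors through a $p$-group — exactly the hypothesis of Lemma~\ref{lem:FP4.1}. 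For the reference complex $S$ I would take (a finite CW model of) the $k$-fold cyclic \emph{branched} cover of $S^3$ along $K$ — or rather a space built so that the inclusion $S\hookrightarrow E_K$ induces the right homology equivalence after the covering is pulled back; the natural candidate is a space realizing the branched cover together with the core of the added solid torus, for which $i_*\colon H_*(S;\Z_p)\to H_*(E_K;\Z_p)$ is an isomorphism because filling in a meridian disk along $K$ does not change mod-$p$ homology in the range that matters (the Wang-sequence / Milnor-type argument for cyclic covers of knot exteriors). Granting such an $S$, Lemma~\ref{lem:FP4.1} yields that $i_*\colon H_*(S_\varphi;\Q(t)^d)\to H_*(Y_\varphi;\Q(t)^d)$ is an isomorphism, and since $S_\varphi$ is a closed $3$-manifold (or a finite complex with finite fundamental group in the relevant covering) its $\Q(t)$-twisted homology vanishes — here one uses that $[H,H]$ is finite so the relevant cover of $S$ is a \emph{finite} cover, on which $t$ acts with finite order, forcing all $\Q(t)^d$-homology to be zero. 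Hence $H_*(Y_\varphi;\Q(t)^d)=0$, which back-translates through the transfer for the finite cover $Y_\varphi\to E_K$ into $H_1(E_K;\Q[H](t))=0$, i.e. $H_1(E_K;\Q[H][t^{\pm1}])$ is $\Q[t^{\pm1}]$-torsion, giving $\D_K^{\rho\circ f}(t)\neq 0$.

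The step I expect to be the main obstacle is constructing the reference complex $S$ and verifying cleanly that (a) $i\colon S\to E_K$ induces a $\Z_p$-homology isomorphism and (b) the pull-back cover $S_\varphi$ has vanishing $\Q(t)^d$-twisted homology. The subtlety in (a) is that one is not comparing $E_K$ with the closed branched cover directly but needs the map at the level that survives pulling back the $k$-fold cyclic cover and then the metabelian data; one must be careful that the $p$-group hypothesis is used precisely where Lemma~\ref{lem:FP4.1} demands it, namely on $\ker\phi'$, and not confused with the whole group $H$ (which need not be a $p$-group). For (b), the key input is that because $[H,H]$ is finite, the cover of $S$ corresponding to $\rho'$ is finite, so $H_1$ with $\Q[t^{\pm1}]$-coefficients is finitely generated over $\Q$ and the deck transformation $t$ has finite order; tensoring up to $\Q(t)$ then kills everything. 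Once these two facts are in place, the rest is bookkeeping with transfer maps and Proposition~\ref{pro:brunching-rule}. I would also remark, as the paper does, that this argument is logically independent of (and lighter than) the general Theorem~\ref{thm:main-4}, since it only needs the metabelian case where the branched-cover geometry is explicit.
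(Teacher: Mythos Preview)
Your overall architecture is right and matches the paper: reduce to showing $H_1(E_K;\Q[H][t^{\pm1}])$ is $\Q[t^{\pm1}]$-torsion, pass to the $k$-fold cyclic cover $Y_\varphi$, and apply the Friedl--Powell lemma using that $\ker\phi'$ maps into the $p$-group $[H,H]$. The gap is in your choice of the reference complex $S$, and this is exactly the step you yourself flag as the obstacle.

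The paper does not use a branched cover at all. It takes $S=S^1$, the meridian circle, with $i\colon S^1\hookrightarrow E_K$. Since a knot exterior is an integral homology circle, $i_*\colon H_*(S^1;\Z_p)\to H_*(E_K;\Z_p)$ is an isomorphism automatically, so hypothesis (a) is free. The pull-back $S^1_\varphi$ is again a circle, generated by $m^k$; under $f'$ this maps to the identity in $[H,H]$ (because $f(m)$ lies in the $\Z_k$ factor of $H=\Z_k\ltimes[H,H]$), while $\phi'(m^k)=k$. Thus the twisted chain complex of $S^1_\varphi$ has boundary map $t^k I_d - I_d$, which is invertible over $\Q(t)$, giving $H_*(S^1_\varphi;\Q(t)^d)=0$ by a one-line computation. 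Your branched-cover candidate, by contrast, has no evident map \emph{into} $E_K$, and your argument for (b) --- that ``the deck transformation $t$ has finite order'' --- is incorrect: $t$ records the infinite cyclic direction coming from $\phi'$, not the finite $[H,H]$-cover, so finiteness of $[H,H]$ does not force $t$ to act with finite order.

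For the passage from $Y_\varphi$ back to $Y=E_K$, the paper uses Lemma~\ref{lem:FH3.14} (the regular representation of $H$ is induced from that of $[H,H]$) together with a Shapiro-type identification $H_*(Y_\varphi;\Q(t)^d)\cong H_*(Y;\Q(t)^{kd})$, rather than a transfer argument; this is cleaner and avoids having to invoke Proposition~\ref{pro:brunching-rule} at the end.
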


\begin{proof}
We may assume $H=\Z_k\ltimes[H,H]$. 
Let $Y=E_K$, $\phi \colon G(K)=\pi_1(Y)\to\Z=\la t\ra$ the abelianization, 
and $Y_\varphi$ the $k$-fold cyclic covering of $Y$ induced by 
$\varphi\colon \Z\to\Z_k$. Let $d=|[H,H]|=p^r$. 
We will show that $H_1(Y;\Q[t^{\pm1}]^{kd})$ is torsion 
over $\Q[t^{\pm1}]$ (see Remark~\ref{rmk:torsion}). 

We first note that $(\rho\circ f)|_{\ker\phi}$ factors through a 
$p$-group $[H,H]$, because $\ker\phi$ is the commutator subgroup of $\pi_1(Y)$. 
Since 
$\pi_1(Y_\varphi)$ also projects to $[H,H]\cong\{1\}\ltimes [H,H]$, we obtain a commutative diagram (with inclusions for the 
horizontal maps):  
\begin{equation*}
\begin{CD}
\pi_1(Y_\varphi)@>>>\pi_1(Y)\\
@V{f'}VV
@VV{f}V\\
[H,H]@>>>\Z_k\ltimes [H,H]
\end{CD}
\end{equation*}
Then, we can consider the representation 
$$
\pi_1(Y_\varphi)\overset{f'}{\longrightarrow}[H,H]
\overset{\rho'}{\longrightarrow}\mathrm{GL}(d,\Q)
$$
where $\rho'$ is the regular representation of $[H,H]$. 
Let $i \colon S^1\to Y$ be the inclusion which represents a meridian 
of the knot $K$, namely, 
this $S^1$ is sent to $1$ under the map $\phi$. 
Since 
$i_* \colon H_*(S^1;\Z_p)\overset{}{\rightarrow}H_*(Y;\Z_p)$ is an 
isomorphism, 
Lemma~\ref{lem:FP4.1} implies that 
$$
i_* \colon H_*(S^1_{\varphi};\Q(t)^d)
\overset{}{\rightarrow}H_*(Y_\varphi;\Q(t)^d)
$$
is an isomorphism, 
where $S^1_{\varphi}$ is the pull-back covering 
$S^1_{\varphi}:=i^*(Y_\varphi)$. 
An elementary calculation shows 
$H_*(S^1_{\varphi};\Q(t)^d)=0$, 
which then implies that 
$H_*(Y_\varphi;\Q(t)^d)\cong0$. 

Next, using $f$ and $f'$, 
we can consider the right action of $\Z[\pi_1(Y)]$ on 
$$
\ind^H_{[H,H]}(\Q^d)=
\Q^d\otimes_{\Z[[H,H]]}\Z[H],
$$ 
and then, obtain a representation
$$
\pi_1(Y)\overset{f}{\longrightarrow}H
\overset{\rho}{\longrightarrow}
\mathrm{GL}(kd,\Q)
$$
where $\rho$ is the regular representation of $H$ 
(see Lemma~\ref{lem:FH3.14}). 
Moreover, 
$(\rho\circ f)\otimes\phi$ induces a representation 
$\pi_1(Y)\to \mathrm{GL}(kd,\Q(t))$. Then, 
we have the following claim 
(we can apply the same proof as in \cite[Claim]{FP12-1} 
to the chain complex $C_*(Y_\varphi;\Q(t)^d)$).

\begin{claim*}\label{lem:shapiro}
$H_*(Y_\varphi;\Q(t)^d)=H_*(Y;\Q(t)^{kd})$.
\end{claim*}

Hence, it follows that 
$H_*(Y;\Q(t)^{kd})\cong0$. 
Since the quotient field $\Q(t)$ is flat over $\Q[t^{\pm1}]$, 
$H_*(Y;\Q[t^{\pm1}]^{kd})$ is torsion over $\Q[t^{\pm1}]$. 
In particular, 
$H_1(Y;\Q[t^{\pm1}]^{kd})$ is $\Q[t^{\pm1}]$-torsion. 
This completes the proof of 
Proposition~\ref{pro:metabelian}. 
\end{proof}

\begin{example}\label{ex:non-vanishing}
The alternating group $A_4=\Z_3\ltimes\Z_2^2$ where $[A_4,A_4]\cong\Z_2^2$, 
the dihedral group $D_{p^n}=\Z_2\ltimes\Z_{p^n}$ where $[D_{p^n},D_{p^n}]\cong\Z_{p^n}$, 
and the metacyclic group $G=G(m,p|k)\cong\Z_m\ltimes\Z_p$, 
where $[G,G]\cong\Z_p$ and $k\in\Z$ is a primitive $m$-th root of $1$ modulo $p$ 
(see Fox~\cite{Fox70-1} for $m=p-1$, Hirasawa-Murasugi~\cite{HM09-1} for $2$-bridge knots, and 
Boden-Friedl~\cite{BF14-1} in general case), 
satisfy the assumption of Proposition~\ref{pro:metabelian}. 
Thus, for these groups, we have $\D_{K}^{\rho\circ f}(t)\not=0$. 
\end{example}

\begin{corollary}\label{cor:order2p}
Let $p$ be an odd prime and $G$ a group of order $2p$. 
If there exists an epimorphism $f \colon G(K)\to G$, then 
$\D_K^{\rho\circ f}(t)\not=0$. 
\end{corollary}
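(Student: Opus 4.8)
The plan is to deduce Corollary~\ref{cor:order2p} directly from Proposition~\ref{pro:metabelian} by classifying all groups $G$ of order $2p$ and checking that each one satisfies the hypotheses of that proposition. First I would recall the elementary group-theoretic fact that, for an odd prime $p$, a group of order $2p$ is either the cyclic group $\Z_{2p}$ or the dihedral group $D_{2p}=\Z_2\ltimes\Z_p$; this follows from Sylow's theorems, since the Sylow $p$-subgroup is normal (being of index $2$) and one then analyzes the action of the order-$2$ element on it. So there are only two cases to handle.

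In the cyclic case $G\cong\Z_{2p}$, the relevant non-vanishing statement is already covered by Example~\ref{ex:cyclic}: for any epimorphism $f\colon G(K)\to\Z_{2p}$ we have $\D_K^{\rho\circ f}(t)=\prod_{j=1}^{2p}\D_K(\alpha^j t)\neq 0$, where $\alpha$ is a primitive $2p$-th root of unity, because $\D_K$ is a nonzero polynomial. Alternatively, this case is subsumed by Proposition~\ref{pro:metabelian} with $k=2p$ and trivial commutator subgroup (a $p$-group in the degenerate sense, or one can simply note abelian groups are metabelian with $[G,G]=\{1\}$). In the dihedral case $G\cong D_{2p}=\Z_2\ltimes\Z_p$, the group is metabelian with $[D_{2p},D_{2p}]\cong\Z_p$, which is a $p$-group; if there is an epimorphism $f\colon G(K)\to D_{2p}$ — so in particular $D_{2p}$ is normally generated by a single element — then Proposition~\ref{pro:metabelian} applies verbatim with $H=D_{2p}$, $k=2$, $d=p$, and yields $\D_K^{\rho\circ f}(t)\neq 0$. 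Combining the two cases completes the argument.

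I should address one subtlety: the remark after Lemma~\ref{lem:FP4.1} notes that there is no epimorphism of a knot group onto the dihedral group $D_{2n}=\Z_2\ltimes\Z_{2n}$, but here $n=p$ is odd, so $D_{2p}$ has the form $\Z_2\ltimes\Z_p$ with $\Z_p$ of odd order, and such epimorphisms do exist (e.g. for the trefoil onto $S_3=D_6$). Hence the dihedral case is genuinely nonempty and is exactly the content that needs Proposition~\ref{pro:metabelian}; the corollary is not vacuous. The cyclic case is likewise nonempty and handled independently.

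The main obstacle is essentially nil — there is no real analytic or topological work, only bookkeeping. The only thing one must be careful about is the classification step: one needs to be sure the list $\{\Z_{2p}, D_{2p}\}$ is exhaustive for $p$ odd, and to phrase Proposition~\ref{pro:metabelian}'s hypothesis "$[H,H]$ is a $p$-group" correctly for the cyclic case where the commutator subgroup is trivial (either invoke Example~\ref{ex:cyclic} instead, which is cleanest, or observe the trivial group is vacuously a $p$-group for the prime $p$ in question). So the proof is short: state the classification, dispatch $\Z_{2p}$ via Example~\ref{ex:cyclic}, and dispatch $D_{2p}$ via Proposition~\ref{pro:metabelian}.
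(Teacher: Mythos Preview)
Your proposal is correct and follows essentially the same route as the paper: classify groups of order $2p$ as cyclic or dihedral, handle the cyclic case via Example~\ref{ex:cyclic}, and handle the dihedral case via Proposition~\ref{pro:metabelian} (the paper phrases the latter through Example~\ref{ex:non-vanishing}, which is just Proposition~\ref{pro:metabelian} specialized to $D_{p^n}$). One cosmetic point: the paper writes $D_n$ for the dihedral group of order $2n$, so what you call $D_{2p}$ is $D_p$ in the paper's convention.
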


\begin{proof}
It is known that the group of order $2p$ is a cyclic or a dihedral group. 
Hence, the assertion follows from 
Examples~\ref{ex:cyclic} and~\ref{ex:non-vanishing}. 
\end{proof}

\subsection{Cable knots}
To prove Theorem~\ref{thm:main-1}(ii), we recall the cabling operation of a knot in $S^3$, and provide a sufficient condition that the knot group of a cable knot surjects to a cyclic group. 

A \textit{cable knot} is a satellite knot with pattern knot being a torus knot. In particular, when the companion knot is a knot $K$ and the pattern knot is the $(p,q)$-torus knot, the cable knot is called the \textit{$(p, q)$-cable} of $K$; see Figure \ref{cable-fig}.

\begin{figure}[h]
\centering
\includegraphics[width=8cm]{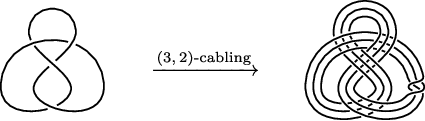}
\caption{The $(3,2)$-cable of the figure-eight knot}\label{cable-fig}
\end{figure}

\begin{proposition}\label{cable-prop}
Let $p$ be a positive integer and $q$ an integer coprime to $p$. Let $K^{(p, q)}$ denote the $(p, q)$-cable of a knot $K$. If the order of a finite group $G$ divides $p$, the image of any group homomorphism $f\colon G(K^{(p, q)}) \to G$ is cyclic.
\end{proposition}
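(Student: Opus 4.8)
The plan is to combine the torus decomposition of the cable knot exterior with the Seifert structure of the cable space. Write $V=\nu(K)$, so that $K^{(p,q)}$ lies in the interior of $V$ and $E_{K^{(p,q)}}=E_K\cup_T C$, where $T=\partial V=\partial E_K$ and $C=V\setminus\nu(K^{(p,q)})$ is the \emph{cable space}; by van Kampen's theorem $G(K^{(p,q)})$ is generated by the images of $G(K)=\pi_1(E_K)$ and of $\pi_1(C)$, with $\pi_1(T)$ mapping into both. If $p=1$ the hypothesis forces $G$ trivial, so one assumes $p\ge 2$. I would then record the standard facts about $\pi_1(C)$ (see, e.g., \cite{BZH14-1}): $C$ is Seifert fibered over an annulus with a single exceptional fiber of multiplicity $p$; the class $h\in\pi_1(C)$ of a regular fiber is central; there is a loop $y$ encircling the exceptional fiber with $y^{p}=h^{\beta}$ for some $\beta$ coprime to $p$; and $\pi_1(C)$ is generated by $y$ together with the image of $\pi_1(T)$. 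The one geometric point is that $K^{(p,q)}$ is itself a regular fiber, so the fibration foliates $T$ by parallel $(p,q)$-curves; hence, if $m$ and $\ell$ denote a meridian and the preferred longitude of $K$, the restriction of $h$ to $T$ is $\ell^{p}m^{q}$, and since $\gcd(p,q)=1$ the quotient $H_1(T)/\langle h\rangle$ is infinite cyclic with $m$ mapping to $\pm p$ times a generator.

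For the main argument I would take any homomorphism $f\colon G(K^{(p,q)})\to G$ with $|G|\mid p$, so that $g^{p}=1$ for every $g\in G$. Applying $f$ to $y^{p}=h^{\beta}$ gives $f(h)^{\beta}=f(y)^{p}=1$, and combined with $f(h)^{p}=1$ and $\gcd(\beta,p)=1$ this forces $f(h)=1$. Therefore $f$ restricted to $\pi_1(T)$ factors through $H_1(T)/\langle h\rangle\cong\Z$, so $f(m)$ is a $p$-th power in $G$ and hence $f(m)=1$. Since the meridian $m$ normally generates $G(K)$, the map $f$ is trivial on $G(K)$, and in particular it kills the entire image of $\pi_1(T)$. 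Consequently the image of $f$ equals $f(\pi_1(C))$, which is generated by the single element $f(y)$ and is therefore cyclic.

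The only real work lies in assembling the structural facts about $\pi_1(C)$ cleanly: the centrality of the regular fiber, the relation $y^{p}=h^{\beta}$ with $\gcd(\beta,p)=1$, and the identification of $h|_{T}$ with the cabling curve $\ell^{p}m^{q}$. All of these are classical for Seifert fibered pieces of knot exteriors, so the main obstacle is really just bookkeeping — in particular, taking care that it is $p$ (the number of longitudinal wraps of the cabling curve), and not $q$, that occurs as the multiplicity of the exceptional fiber, which is precisely what makes "$|G|$ divides $p$" the correct hypothesis.
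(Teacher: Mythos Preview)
Your proof is correct. Both you and the paper use a van Kampen decomposition and arrive at $f(m)=1$ through a relation involving $\ell^{p}m^{q}$, but the decompositions differ. The paper places $K^{(p,q)}$ on $\partial N_K$ and cuts along the complementary \emph{annulus} in $\partial N_K$, so that the inner piece $N_K\cap E_{K^{(p,q)}}$ is simply a solid torus with $\pi_1=\langle\ell'\rangle$; the annulus core then gives the single amalgamating relation $\ell^{p}m^{q}=\ell'^{\,p}$, whence $f(\ell)^{p}=f(\ell')^{p}=e$ immediately yields $f(m)^{q}=e$, and $\gcd(q,|G|)=1$ finishes in one line. You instead push $K^{(p,q)}$ into the interior of $V$ and cut along the full torus $T$, so the inner piece is the cable space $C$; you then invoke its Seifert structure (centrality of $h$, the exceptional-fiber relation $y^{p}=h^{\beta}$) to first obtain $f(h)=1$, and only afterwards deduce $f(m)=1$. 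The paper's version is more elementary---no Seifert facts are needed and the inner piece has infinite cyclic fundamental group---whereas your route makes the JSJ picture explicit and would adapt more readily to other satellite constructions.
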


\begin{proof}
Let $N_K$ be a closed tubular neighborhood of $K$ and construct $K^{(p, q)}$ in $\partial N_K$; we regard a $q/p$-slope of $K$ as $K^{(p, q)}$. We take an open tubular neighborhood $\nu(K^{(p,q)})$ of $K^{(p,q)}$ so that the exterior $E_{K^{(p,q)}} = S^3 \setminus
\nu(K^{(p,q)})$ satisfies 
\begin{itemize}
\item[(i)] 
$E_K \cap E_{K^{(p,q)}} \cong E_K$; 
\item[(ii)] 
$N_K \cap E_{K^{(p,q)}} \cong N_K$; and 
\item[(iii)] 
$\partial N_K \cap E_{K^{(p,q)}}$ is an annulus, whose central curve is a $q/p$-slope of $K$. 
\end{itemize}
Then, we apply the Seifert-Van Kampen theorem to $E_{K^{(p, q)}} = (E_K \cap E_{K^{(p,q)}}) \cup (N_K \cap E_{K^{(p, q)}})$ to obtain
$$G(K^{(p, q)}) \cong (G(K) * \mathbb{Z}) / N.$$
Here, $N$ is the minimal normal subgroup of $G(K) * \mathbb{Z}$ containing $\ell^p m^q \ell'^{-p}$, where $(m, \ell) \in G(K)^2$ is a meridian-longitude pair of $K$ and $\ell'$ is a generator of $\mathbb{Z} \cong \pi_1(N_K \cap E_{K^{(p, q)}})$.
\par Since the order $|G|$ of $G$ divides $p$, we have $f(\ell)^p = f(\ell')^p = e$ and hence $f(m)^q = e$. Because $q$ is coprime to $|G|$, $f(m)^q = e$ implies $f(m) = e$. Recalling that $G(K)$ is normally generated by $m$, we find that the restriction of $f$ to $G(K)$ is trivial. Thus, the image of $f$ is generated by $f(\ell')$.
\end{proof}

\section{Proof}

In this section, we prove Theorems~\ref{thm:main-1}, \ref{thm:main-2}, 
and Corollary~\ref{cor:main-3}. 

\subsection{Proof of Theorem~\ref{thm:main-1}}\label{sub:main-1}
(i) There are $59$ finite groups of order less than $24$, 
and $24$ of them are abelian. 
We see from Lemma~\ref{lem:abelian} that 
every irreducible representation of a finite abelian group 
is one-dimensional. Thus, 
the corresponding twisted Alexander polynomial 
$\D_K^{\rho\circ f}(t)$ is 
described by the Alexander polynomial $\D_K(t)$, 
and $\D_K(t)$ never vanishes for any knot $K$ 
as stated at the beginning of the introduction. 

From the above argument, we only have to show that the 
twisted Alexander polynomials associated to the regular representations of the following $12$ non-abelian groups never vanish for any knot $K$: 
$$
S_3=D_3=\Z_3\rtimes \Z_2,~
D_5=\Z_5\rtimes\Z_2,~
\mathrm{Dic}_3=\Z_3\rtimes \Z_4,~
A_4=\Z_2^2\rtimes\Z_3,
$$
$$
D_7=\Z_7\rtimes\Z_2,~
D_9=\Z_9\rtimes\Z_2,~
\Z_3\times S_3=\Z_3\rtimes\Z_6,~
\Z_3\rtimes S_3=\Z_3^2\rtimes_2\Z_2,
$$
$$
\mathrm{Dic}_5=\Z_5\rtimes_2\Z_4,~
F_5=\Z_5\rtimes\Z_4,~
\Z_7\rtimes\Z_3,~
D_{11}=\Z_{11}\rtimes\Z_2,
$$
where 
$S_n$ is the symmetric group, $\mathrm{Dic}_n$ is the 
dicyclic (binary dihedral) group, and $F_5$ is the Frobenius group. 

All of the above groups have the form $H=\Z_k\ltimes [H,H]$ where $[H,H]$ is 
an abelian $p$-group, so that the assertion follows from Proposition~\ref{pro:metabelian}. 

(ii) 
Let $K$ be a non-fibered knot. For any positive integer $n$, the $(n!, 1)$-cable $K^{(n!, 1)}$ of $K$ is non-fibered by \cite{hms} (see also \cite{BZH14-1}). By Proposition \ref{cable-prop}, the image of any homomorphism of $G(K^{(n!, 1)})$ to a group of order less than or equal to $n$ is cyclic, and then, 
Example~\ref{ex:cyclic} shows that the twisted Alexander polynomial is not zero. Thus, we have $\mathcal{O}(K^{(n!, 1)}) > n$.

(iii) Let $\rho_i : G(K_i) \to \mathrm{GL}(n,\C), (i=1,2)$ be representations such that $\rho_1(a)$ is conjugate with $\rho_2(b)$ for some meridian loops $a\in G(K_1)$ and $b\in G(K_2)$. Then, Cho constructed in \cite{Cho15-1} a \textit{connected sum} $\rho_1\#\rho_2 : G(K_1\# K_2) \to \mathrm{GL}(n,\C)$ using $\rho_1$ and $\rho_2$. The resulting representation is not unique, but it satisfies the following property: Given a representation $\rho : G(K_1\# K_2) \to \mathrm{GL}(n,\C)$, there exist representations $\rho_i : G(K_i) \to \mathrm{GL}(n,\C), (i=1,2)$, which are unique up to conjugation, such that one of the possible connected sums $\rho_1 \# \rho_2$ equals $\rho$. 
Moreover, it is shown that the product formula 
$$
W(K_1\#K_2,\rho)=
W(K_1,\rho_1)\cdot W(K_2,\rho_2)\cdot\det\left(I_n-(\rho\otimes\phi)(a)\right)
$$
holds, where $W(K,\rho)$ denotes Wada's invariant of a knot $K$ and its representation $\rho$ (see \cite{Wada94-1} for precise definition), which is referred to as the twisted Alexander polynomial or twisted Reidemeister torsion, and $I_n$ is the $n\times n$ identity matrix. Since $\D_K^\rho(t)=0$ if and only if $W(K,\rho)=0$ 
(see \cite[Section~3.3.1]{FV10-1}), applying the formula to our setting, the desired equality $\ord(K_1\# K_2)=\min\{\ord(K_1),\ord(K_2)\}$ can be shown as follows: 
Without loss of generality, we may assume $\ord(K_1)\leq\ord(K_2)$. 
Let $\rho_1\colon G(K_1)\to\mathrm{GL}(n,\C)$ be a representation that realizes $\ord(K_1)$. Then, by the product formula, we have $\D_{K_1\#K_2}^\rho(t)=0$, 
and hence the inequality $\ord(K_1\#K_2) \leq \min\{\ord(K_1), \ord(K_2)\}$ holds. Next, let $\rho\colon G(K_1\# K_2)\to \mathrm{GL}(n,\C)$ be a representation that realizes $\ord(K_1\#K_2)$. Then, the product formula implies $\Delta_{K_i}^{\rho_i}(t)=0$ ($i=1$ or $2$) for representations $\rho_i\colon G(K_i)\to \mathrm{GL}(n,\C)$ such that $\rho=\rho_1\# \rho_2$. Thus, we obtain the desired inequality 
$\ord(K_1\# K_2) \geq \min\{\ord(K_1), \ord(K_2)\}$ in both cases.

(iv) 
If $\pi \colon G(K_1)\to G(K_2)$ is an epimorphism such that 
$\phi_1=\phi_2\circ \pi$ for abelianizations, 
and $f_1=f_2\circ \pi$ for $f_2=G(K_2)\to G$, 
we have $\D_{K_1}^{\rho\circ f_1}(t)
=\D_{K_2}^{\rho\circ f_2}(t)\cdot h(t)$ 
for some $h(t)\in\Z[t^{\pm1}]$ 
(see \cite[Theorem~3.1]{KSW05-1}). 
Thus, we see that the following inequality holds: $\ord(K_1)\leq\ord(K_2)$. 

(v) For a periodic knot $K$ and its quotient knot $K'$, 
there is an epimorphism from $G(K)$ to $G(K')$. 
Hence, (iv) implies the assertion. 

(vi) Since a proper degree one map $E_K\to E_{K'}$ induces 
an epimorphism $G(K)\to G(K')$, 
(iv) implies the assertion. 

This completes the proof of Theorem~\ref{thm:main-1}.  

\subsection{Proof of Theorem~\ref{thm:main-2}}
In our previous paper \cite[Theorem~3.2]{MS22-1}, 
we showed the statements (i), (ii), $61<\ord(K)\leq120$, if 
$K=8_{15}, 9_{25}, 9_{39},9_{41}, 9_{49}, 10_{58}$, and 
$\ord(K)>61$, otherwise. 

(iii) We provide an epimorphism $f \colon G(10_{166})\to S_4\ltimes \Z_2^2$ such 
that $\D_K^{\rho\circ f}(t)=0$. 
Let us take a presentation of $G(10_{166})$ as follows: 
\begin{align*}
G(10_{166}) = 
\langle x_1, x_2, \ldots, x_{10} \, | \, 
& x_4 x_2 \bar{x}_4 \bar{x}_1 , x_9 x_2 \bar{x}_9 \bar{x}_3 , x_6 x_4 \bar{x}_6 \bar{x}_3 , x_8 x_5 \bar{x}_8 \bar{x}_4 , x_2 x_6 \bar{x}_2 \bar{x}_5 , \\
& x_9 x_7 \bar{x}_9 \bar{x}_6 , x_5 x_8 \bar{x}_5 \bar{x}_7 , x_1 x_9 \bar{x}_1 \bar{x}_8 , x_2 x_9 \bar{x}_2 \bar{x}_{10},  x_7 x_1 \bar{x}_7 \bar{x}_{10}
\rangle
\end{align*}
where $\bar{x}$ is the inverse of $x$. 
We also take a presentation of the finite group 
$G=S_4\ltimes\Z_2^2$ as follows: 
\begin{align*}
G=
\la g_1,g_2,\ldots,g_6\,|\,
&
g_1^2=g_2^2=g_3^2=g_4^2=g_5^3=g_6^2=1, 
g_5g_2g_5^{-1}=g_1g_2=g_2g_1,\\
&
g_1g_3=g_3g_1, g_1g_4=g_4g_1, g_5g_1g_5^{-1}=g_6g_1g_6=g_2, g_2g_3=g_3g_2,\\
&
g_2g_4=g_4g_2, g_6g_2g_6=g_1, g_5g_3g_5^{-1}=g_6g_3g_6=g_3g_4=g_4g_3,\\
&
g_5g_4g_5^{-1}=g_3, g_4g_6=g_6g_4, g_6g_5g_6=g_5^{-1}
\ra.
\end{align*}
It is easy to see that the following map $f \colon G(10_{166}) \to G$ is a homomorphism: 
\begin{align*}
& 
f(x_1) = g_2g_4g_5g_6, ~
f(x_2) = g_6g_3, ~
f(x_3) = g_6g_3, ~
f(x_4) = g_1g_6g_3g_5, \\
&
f(x_5) = g_1g_6g_3g_5,  ~
f(x_6) = g_2g_4g_5g_6, ~
f(x_7) = g_6 g_3 g_5, ~
f(x_8) = g_6 g_3 g_5 , \\
&
f(x_9) = g_1 g_2 g_3 g_6, ~
f(x_{10}) = g_1g_2g_3g_6.
\end{align*} 
It might be unclear that the above homomorphism is actually an epimorphism, but we can check it by the following correspondence: 
\begin{align*}
&
f(x_4^3x_7)=g_1, \quad f(x_4x_9x_1x_2)=g_2, \quad f(x_4^2)=g_3,\\
&
f(x_9^2)=g_4, \quad f(x_1x_4x_9x_1)=g_5, \quad f(x_1^3x_2)=g_6.
\end{align*}

Since the group $G$ can be embedded into the symmetric group $S_8$ and hence in $\mathrm{GL}(8,\Z)$ via permutation 
matrices. Let $\tau \colon G\to\mathrm{GL}(8,\Z)$ denote 
this representation. 
Then, 
we can check that 
the twisted Alexander polynomial of $10_{166}$ 
associated to $\tau \circ f$ is zero. 
Hence, 
by Proposition \ref{pro:brunching-rule}, 
we obtain $\D_{10_{166}}^{\rho\circ f}(t) = 0$ for the 
regular representation 
$\rho \colon G\overset{}{\rightarrow}\mathrm{GL}(96,\Z)$. 
On the other hand, with the aid of a computer, 
we can show that $\D_{10_{166}}^{\rho\circ f}(t) \neq 0$ for 
all non-abelian finite groups of order between $61$ and $96$, 
though $G(10_{166})$ admits epimorphisms onto some of them. 
Therefore, we get $\ord(10_{166}) = 96$. 

(iv) 
We can check that for $807$ non-abelian finite groups of order between $61$ and $120$, 
$\D_K^{\rho\circ f}(t)\not=0$ holds by computer-aided calculations. 

(v) 
Similarly, 
we can also check that for the other knots and 
for any non-abelian finite groups of order between $61$ and $120$, 
$\D_K^{\rho\circ f}(t)\not=0$ holds. 
Finally, non-abelian groups of order between $121$ and $125$ normally generated by a single element 
are the dihedral group $D_{61}$ and the dicyclic group 
$\mathrm{Dic}_{31}=\Z_4\ltimes\Z_{31}$, and their commutator subgroups are abelian $p$-groups, 
so that the corresponding twisted Alexander polynomials of any knot 
are nonzero by Proposition~\ref{pro:metabelian}.

This completes the proof of Theorem~\ref{thm:main-2}. 

\subsection{Proof of Corollary~\ref{cor:main-3}}

Let $K_1$ be the non-fibered knot $9_{35}$ or $9_{46}$. 
If we take the connected sum $K_2=K_1\#K_1$, 
then it is non-fibered, and we obtain 
$$
\ord(K_2)=\min\{\ord(K_1),\ord(K_1)\}=\ord(K_1)=24
$$ 
by Theorem~\ref{thm:main-1}(iii). 
Inductively, we have $\ord(K_n)=24$ for the connected sum $K_n=\#_n K_1$. Other cases are similar. 

As for the latter assertion, we use the satellite knot construction 
(see \cite{CS16-1} for instance). 
Let $L$ be a non-fibered prime knot with $\ord(L)=24$ (e.g. $L=9_{35}, 9_{46}$). Choose an embedded circle $c$ in $S^3\setminus L$ satisfying the following: 
$c$ is unknotted in $S^3$, $c$ does not bound a $2$-disk in $E_L$, and $L\cup c$ is a prime link. Choose a hyperbolic knot $P$, and $P_n$ be the connected sum $\#_nP$. Let $L_n=L(c,P_n)$ be the satellite knot where $P_n$ is the companion and $L$ viewed as a knot in the solid torus $E_c$ is the pattern. 
Then, it is well known that the knot group $G(L_n)$ surjects to $G(L)$. 
Thus, $\ord(L_n)\leq \ord(L)=24$ holds by Theorem \ref{thm:main-1}(iv), and hence, $\ord(L_n)=24$ by Theorem \ref{thm:main-1}(i). 
A standard argument on satellite construction with distinct companion shows that $L_n$ and $L_m$ are not equivalent for any $n\not=m$. The primality of $L_n$ is also shown in \cite[Theorem 4.5]{CS16-1}. Hence, we have an infinitely many prime non-fibered knots $L_n$ with $\ord(L_n)=24$. This completes the proof of Corollary~\ref{cor:main-3}. 

\section{Vanishing of twisted Alexander polynomials}

In this section, we give a characterization of twisted Alexander vanishing groups of knots. 
For reader's convenience we recall the statement: 

\begin{theorem}[Theorem~\ref{thm:main-4}]\label{char-thm}
A finite group $G$ is a TAV group if and only if 
$G$ is normally generated by a single element and 
the commutator subgroup of $G$ is not a $p$-group. 
\end{theorem}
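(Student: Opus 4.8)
I would prove the two implications separately, basing both on a lifting criterion for the vanishing of the twisted Alexander polynomial that I would isolate and prove first (this is Theorem~\ref{lifting-thm}); throughout, $\rho$ denotes the regular representation of the relevant finite group.

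\emph{Necessity.} Suppose $G$ is a TAV group, realised by an epimorphism $f\colon\G\to G$ with $\D_K^{\rho\circ f}(t)=0$. Since $\G$ is normally generated by a meridian $m$, its image $f(m)$ normally generates $G$, so $G$ is normally generated by a single element. To rule out $[G,G]$ being a $p$-group I would run the argument of Proposition~\ref{pro:metabelian} essentially verbatim: the only features of a metabelian quotient $\Z_k\ltimes[H,H]$ used there are that its commutator subgroup is a $p$-group and that the quotient by it is cyclic, and the second is automatic once $G$ is normally generated by one element. Concretely, put $k=|G/[G,G]|$ and pass to the $k$-fold cyclic cover $Y_\varphi$ of $Y=E_K$; because $\G\to G\to G/[G,G]$ coincides with the reduction $\G\xrightarrow{\phi}\Z\to\Z_k$, the map $f$ restricts to a homomorphism $\pi_1(Y_\varphi)\to[G,G]$, and Lemma~\ref{lem:FP4.1} applied to the meridian inclusion $S^1\hookrightarrow Y$, combined with the fact that $\ind^G_{[G,G]}$ of the regular representation of $[G,G]$ is the regular representation of $G$ (Lemma~\ref{lem:FH3.14}), yields $H_*(E_K;\Q(t)^{|G|})=0$. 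Hence $H_1(E_K;\Q[t^{\pm1}]^{|G|})$ is $\Q[t^{\pm1}]$-torsion and $\D_K^{\rho\circ f}(t)\neq0$, a contradiction. (Alternatively, this is the ``$p$-group'' half of Theorem~\ref{lifting-thm}.)

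\emph{Sufficiency.} Now suppose $G$ is normally generated by one element with $[G,G]$ not a $p$-group; I must build a knot $K$ and an epimorphism $f\colon\G\to G$ with $\D_K^{\rho\circ f}(t)=0$. The first step is a topological reinterpretation: by Remark~\ref{rmk:torsion} and a Shapiro-lemma identification, $\D_K^{\rho\circ f}(t)=0$ if and only if the finite regular cover $\widehat E$ of the infinite cyclic cover $E_K^\infty$ determined by $f|_{[\G,\G]}\colon[\G,\G]\twoheadrightarrow[G,G]$ has infinite first Betti number over $\Q$. The second step, which is the content of Theorem~\ref{lifting-thm}, is to recast ``$b_1(\widehat E)=\infty$'' as the failure of the homomorphism $(f,\phi)\colon\G\to G\times\Z$ to lift to a homomorphism into a prescribed extension of $G\times\Z$ by a free abelian group, with the obstruction governed entirely by the $p$-primary structure of $[G,G]$. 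The third step is the construction: using the classical fact that every group normally generated by one element is a knot-group quotient \cite{GA75-1,Johnson80-1}, I would realise $G$ as a quotient of the group of a suitable non-fibered knot --- obtained as a satellite, or an iterated cable, of a non-fibered knot, with the quotients controlled as in Proposition~\ref{cable-prop} --- and then use the hypothesis that $|[G,G]|$ has at least two distinct prime divisors to arrange that the corresponding $[G,G]$-cover of $E_K^\infty$ has unbounded $\Q$-homology, i.e.\ that the lift forbidden by Theorem~\ref{lifting-thm} genuinely does not exist.

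\emph{Main obstacle.} The hard part is the sufficiency direction, and within it the construction of the knot: realising $G$ as a quotient of a knot group is classical, but doing so while forcing a prescribed finite cover of the infinite cyclic cover to have infinite $\Q$-homology is delicate, and this is exactly what Theorem~\ref{lifting-thm} is built to reduce to a tractable lifting problem. Once that reduction is in place, the dichotomy ``$[G,G]$ a $p$-group'' versus ``not a $p$-group'' is the sharp threshold --- one prime's worth of covering can always be absorbed, which is the Friedl--Powell mechanism underlying Proposition~\ref{pro:metabelian}, whereas two distinct primes obstruct the lift --- and producing a knot that witnesses this obstruction is the heart of the matter.
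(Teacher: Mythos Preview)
Your necessity argument is correct and essentially follows the Friedl--Powell route the paper records in Proposition~\ref{pro:metabelian}, now freed from the metabelian hypothesis; as you observe, only the cyclicity of $G/[G,G]$ (automatic once $G$ is normally generated by one element) and the $p$-group hypothesis on $[G,G]$ are used. The paper's own proof of this direction (Lemma~\ref{pgp-lem}) is different in nature: it works inside $\Z[\tilde G]$ and uses Gruenberg's theorem that $\bigcap_i I_{G_0}^i=0$ for a finite $p$-group $G_0$ to show that no nonzero left ideal $I$ can satisfy $I_{\tilde G}I=I$. Both arguments are valid, and the paper explicitly notes that they are ``different in nature''.

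The sufficiency direction, however, has a genuine gap. First, you have the content of Theorem~\ref{lifting-thm} inverted: the paper shows that $\D_K^{\rho\circ f}(t)=0$ is equivalent to the \emph{existence} of a nontrivial lift $\tilde f\colon\G\to\Z[G\times\Z]\rtimes(G\times\Z)$ of $f\times\phi$ (``nontrivial'' meaning the image meets the abelian kernel), not to the failure of some lift. Second, and more importantly, the paper does not construct the witnessing knot by any topological operation; Proposition~\ref{cable-prop} is used only to show that $\ord|_{\mathcal N}$ is unbounded (it forces homomorphisms to have \emph{cyclic} image, the opposite of what you want here) and plays no role in this proof. Instead, after normalising the lift so that the meridian maps to $(0,\mu)$ (Lemma~\ref{m0-lem}), the paper reduces the existence question to a purely ring-theoretic one (Proposition~\ref{ideal-lem}): such $K$ and $f$ exist if and only if $\Z[\tilde G]$ contains a finitely generated nonzero left ideal $I$ with $I_{\tilde G}I=I$, where $\tilde G\subset G\times\Z$ is generated by $[G,G]\times\{0\}$ and $(g_0,1)$. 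The ideal is then constructed by passing to a minimal subgroup $H\subset[G,G]$ of non-prime-power order --- either cyclic of order $pq$ or non-abelian of order $p^nq$ with elementary abelian Sylow $p$-subgroup --- and invoking Gruenberg's computation of $\bigcap_i I_H^i$ for such $H$ (Lemmas~\ref{cyclic-lem} and~\ref{metabel-lem}). Once $I$ is in hand, $I\rtimes\tilde G$ is finitely generated and normally generated by a single element, and \cite{GA75-1}, \cite{Johnson80-1} supply the knot abstractly; no explicit satellite or cable is ever produced. Your plan is missing this ideal-theoretic reduction and the Gruenberg input, which together are the substance of the sufficiency proof.
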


To prove Theorem~\ref{char-thm}, we first show a more general statement on the vanishing of twisted Alexander polynomials.

\subsection{Lifting criteria}

Let $X$ be a connected finite CW complex and $G$ a finite group. As seen in Section~\ref{sub:tap}, the twisted Alexander polynomial $\Delta^{\rho \circ f}_{X, \phi}(t) \in \mathbb{Z}[t^{\pm 1}]$ is defined for homomorphisms $\phi \colon \pi_1(X) \to \mathbb{Z}$ and $f \colon \pi_1(X) \to G$, where $\rho \colon G \to {\rm Aut}_{\mathbb{Z}}(\mathbb{Z}[G])$ is given by the right multiplication. We take $\mathbb{Z}$ as the coefficient ring in this section, but, of course, $\Delta^{\rho \circ f}_{X, \phi}(t)$ is equal to the twisted Alexander polynomial associated to the regular representation with complex coefficient up to multiplication by a unit in $\mathbb{C}[t^{\pm 1}]$.

The following theorem states that the vanishing of twisted Alexander polynomials is equivalent to the existence of a certain lift of group homomorphisms. 

\begin{theorem}\label{lifting-thm}
The twisted Alexander polynomial $\Delta^{\rho \circ f}_{X, \phi}(t)$ is zero if and only if there exists a nontrivial lift $\tilde{f} \colon \pi_1(X) \to \mathbb{Z}[G \times \mathbb{Z}] \rtimes (G \times \mathbb{Z})$, where $G \times \mathbb{Z}$ acts on $\mathbb{Z}[G \times \mathbb{Z}]$ by the left multiplication, of the homomorphism $f \times \phi \colon \pi_1(X) \to G \times \mathbb{Z}$, i.e., a group homomorphism $\tilde{f}$ such that $p_{G \times \mathbb{Z}} \circ \tilde{f} = f \times \phi$ and ${\rm Im}\; \tilde{f} \cap (\mathbb{Z}[G \times \mathbb{Z}] \times \{(e, 0)\}) \neq \{(0; e, 0)\}$, where $p_{G \times \mathbb{Z}} \colon \mathbb{Z}[G \times \mathbb{Z}] \rtimes (G \times \mathbb{Z}) \to G \times \mathbb{Z}$ is the projection.
\end{theorem}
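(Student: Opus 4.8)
The plan is to translate the vanishing of $\Delta^{\rho\circ f}_{X,\phi}(t)$ into a statement about the twisted homology module $H_1(X;\mathbb{Z}[G][t^{\pm1}])$ and then to realize a nonzero element of that module as the obstruction-to-triviality of a lift. Concretely, by the definition in Section~\ref{sub:tap} together with Remark~\ref{rmk:torsion}, $\Delta^{\rho\circ f}_{X,\phi}(t)=0$ is equivalent to $H_1(X;\mathbb{Z}[G\times\mathbb{Z}])=H_1\bigl(C_*(\tilde X)\otimes_{\mathbb{Z}[\pi_1(X)]}\mathbb{Z}[G\times\mathbb{Z}]\bigr)$ having positive rank over $\mathbb{Z}$, where $\pi_1(X)$ acts on $\mathbb{Z}[G\times\mathbb{Z}]$ through $f\times\phi$ and right multiplication. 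So the first step is to set up this homology module carefully, noting that a ``nontrivial lift'' $\tilde f$ of $f\times\phi$ into the semidirect product $\mathbb{Z}[G\times\mathbb{Z}]\rtimes(G\times\mathbb{Z})$ is exactly the data of a crossed homomorphism (a $1$-cocycle) $c\colon\pi_1(X)\to\mathbb{Z}[G\times\mathbb{Z}]$ with respect to this action, via $\tilde f(\gamma)=(c(\gamma);f\times\phi(\gamma))$; the cocycle condition $c(\gamma\delta)=c(\gamma)+\gamma\cdot c(\delta)$ is precisely what makes $\tilde f$ a homomorphism, and ``${\rm Im}\,\tilde f\cap(\mathbb{Z}[G\times\mathbb{Z}]\times\{(e,0)\})\neq\{(0;e,0)\}$'' says exactly that $c$ is nonzero on $\ker(f\times\phi)$, equivalently $c$ is not a coboundary that would be forced to vanish — more precisely, it says $c$ does not arise from the zero element under restriction, which after identifying will mean $c$ represents a nonzero class after tensoring appropriately.

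The second step is the homological–cohomological dictionary. Since $X$ is a finite CW complex and we are working with the group ring $A:=\mathbb{Z}[G\times\mathbb{Z}]$ as a bimodule, the key is the standard fact that $H^1(\pi_1(X);A)$ (group cohomology with $A$ as a left module via $f\times\phi$) classifies crossed homomorphisms modulo principal ones, and that the $1$-cocycles themselves (before quotienting) form a module surjecting onto $H^1$ with kernel the principal crossed homomorphisms $B^1$. A crossed homomorphism exists that is nonzero on $\ker(f\times\phi)$ if and only if $Z^1(\pi_1(X);A)\neq B^1(\pi_1(X);A)$ once one checks that a principal crossed homomorphism $\gamma\mapsto \gamma\cdot a - a$ restricted to $\ker(f\times\phi)$ is zero iff $a$ is $(G\times\mathbb{Z})$-invariant in $A$, which forces $a=0$ since $A$ has no nonzero invariants under the free $\mathbb{Z}$-factor. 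Hence the existence of the desired nontrivial lift is equivalent to $H^1(\pi_1(X);A)\neq 0$. Then I would invoke: $H^1(\pi_1(X);A)\cong H^1(X;A)$ (for a $K(\pi,1)$ this is on the nose; in general one uses that twisted Alexander polynomials depend only on $\pi_1$ and $\phi$, or passes to the $2$-complex of a presentation, which computes $H_0,H_1$ and $H^0,H^1$ correctly) and the universal coefficient / duality relation between $H^1(X;A)$ and $H_1(X;A)$. Over the ring $A=\mathbb{Z}[G\times\mathbb{Z}]=\mathbb{Z}[G][t^{\pm1}]$ the relevant computation is that $H^1(X;A)\neq 0$ iff $H_1(X;A)$ is not a torsion $\mathbb{Z}$-module of the expected form — this is exactly Remark~\ref{rmk:torsion} rephrased — so $H^1(X;A)\neq 0\iff \Delta^{\rho\circ f}_{X,\phi}(t)=0$.

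The cleanest way to organize steps two and three together is via Fox calculus on a presentation $\pi_1(X)=\langle x_1,\dots,x_n\mid r_1,\dots,r_m\rangle$: the twisted Alexander polynomial is computed from the Jacobian matrix $\bigl((f\times\phi)(\partial r_i/\partial x_j)\bigr)$ over $A$, and a crossed homomorphism $c$ is determined freely by its values $c(x_1),\dots,c(x_n)\in A$ subject to the $m$ linear relations $\sum_j (f\times\phi)(\partial r_i/\partial x_j)\cdot c(x_j)=0$ — i.e.\ $\bigl(c(x_j)\bigr)$ lies in the kernel of the transpose Fox Jacobian. A nonzero solution that is moreover nonzero on $\ker(f\times\phi)$ exists iff that matrix fails to be injective after tensoring with the fraction field of $A$ (here one must be a little careful since $A=\mathbb{Z}[G][t^{\pm1}]$ is not a domain when $G$ is nontrivial, so I would tensor with $\mathbb{Q}[G][t^{\pm1}]$ and use that this is a finite product of matrix rings over fields $\mathbb{K}_i[t^{\pm1}]$ after isotyping, reducing to the irreducible-summand picture of Proposition~\ref{pro:brunching-rule}), and that failure of injectivity is exactly the condition $\prod_i(\Delta^{\rho_i\circ f}_{X}(t))^{\dim\rho_i}=0$. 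The step I expect to be the main obstacle is precisely this last bookkeeping: matching ``the cocycle is nonzero \emph{on the kernel of $f\times\phi$}'' (not merely nonzero as a function, and not merely nonzero in $H^1$) with ``$H_1$ has positive rank,'' because one has to rule out the degenerate possibility that every nonzero cocycle is supported away from $\ker(f\times\phi)$ — this is where the freeness of the $\mathbb{Z}=\langle t\rangle$ direction is used decisively (an element of $A$ fixed by $t$ is zero), and where I would need to argue that a class coming from $H_1(X;A)$ of positive rank can always be represented by a crossed homomorphism whose restriction to the commutator-type subgroup $\ker(f\times\phi)$ is genuinely nonzero, rather than cohomologically nonzero only.
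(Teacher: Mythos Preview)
Your translation of lifts into crossed homomorphisms is correct, as is the identification of ``nontrivial'' with ``$c|_{\ker(f\times\phi)}\neq 0$''. The genuine gap is the claimed equivalence ``a nontrivial lift exists iff $H^1(\pi_1(X);A)\neq 0$''. You justify this by asserting that a principal crossed homomorphism $\gamma\mapsto\gamma\cdot a-a$ vanishes on $\ker(f\times\phi)$ \emph{iff} $a$ is $(G\times\mathbb{Z})$-invariant; but in fact it vanishes there for \emph{every} $a$, since elements of the kernel act trivially on $A$. So $B^1$ always restricts to zero, which gives one implication, but the converse fails: any cocycle inflated from the quotient $Q:=\mathrm{Im}(f\times\phi)\subset G\times\mathbb{Z}$ also vanishes on the kernel, and $H^1(Q;\mathbb{Z}[G\times\mathbb{Z}])$ is nonzero (being virtually~$\mathbb{Z}$, $Q$ has two ends). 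Concretely, take $G$ trivial and $X$ the unknot exterior: then $\pi_1(X)=\mathbb{Z}$, $H^1(\mathbb{Z};\mathbb{Z}[t^{\pm1}])\cong\mathbb{Z}\neq 0$, yet $\ker\phi=\{1\}$ so no lift can be nontrivial, and indeed $\Delta=1$. The inflation--restriction sequence pinpoints the correct condition: a nontrivial lift exists iff $H^1(\pi_1(X);A)$ strictly contains the image of $H^1(Q;A)$ under inflation, not merely iff it is nonzero. Your Fox-calculus reformulation inherits the same defect --- for a deficiency-one presentation of a knot group the Fox matrix is $(n-1)\times n$ and its kernel is always at least rank one over the fraction field by the fundamental identity $\sum_j(\partial r/\partial x_j)(x_j-1)=r-1$, so ``fails to be injective'' holds unconditionally; $\Delta=0$ is detected only by the kernel being strictly larger.

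The paper's proof confronts exactly this point, on the dual (homological) side. Working with the covering $X_{f\times\phi}$ and the basepoint fibre $\tilde x$, the crucial step is (ii)$\Leftrightarrow$(iii): every $\mathbb{Z}[t^{\pm1}]$-homomorphism $H_1(X_{f\times\phi})\to\mathbb{Z}[t^{\pm1}]$ extends to one on the relative group $H_1(X_{f\times\phi},\tilde x)$. The diagram chase establishing this (using surjectivity of $s^*-t$ on $H^0_{\mathbb{Z}}(\tilde x;\mathbb{Z}[t^{\pm1}])$) is precisely what neutralises the extra $H^1(Q;A)$ contribution you have not accounted for. Only then does the bijection between $\mathbb{Z}[t^{\pm1}]$-homomorphisms out of the relative $H_1$ and lifts go through, with ``nontrivial on absolute $H_1$'' matching ``nontrivial lift''. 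Your closing paragraph correctly senses that this is the sticking point, but the argument you gave earlier does not actually dispose of it.
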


\begin{proof}
We shall show that the following are equivalent; in particular, the theorem asserts the equivalence of (i) and (v):
\begin{itemize}
\item[(i)] $\Delta^{\rho \circ f}_{X, \phi}(t) = 0.$
\item[(ii)] ${\rm Hom}_{\mathbb{Z}[t^{\pm 1}]}(H_1(X_{f \times \phi}), \mathbb{Z}[t^{\pm 1}]) \neq 0.$
\item[(iii)] There exists a $\mathbb{Z}[t^{\pm 1}]$-homomorphism from $H_1(X_{f \times \phi}, \tilde{x})$ to $\mathbb{Z}[t^{\pm 1}]$ whose restriction to $H_1(X_{f \times \phi}) \subset H_1(X_{f \times \phi}, \tilde{x})$ is nontrivial.
\item[(iv)] There exists a nontrivial lift $\tilde{f} \colon \pi_1(X) \to M \rtimes (G \times \mathbb{Z})$ of $f \times \phi$.
\item[(v)] There exists a nontrivial lift $\tilde{f} \colon \pi_1(X) \to \mathbb{Z}[G \times \mathbb{Z}] \rtimes (G \times \mathbb{Z})$ of $f \times \phi$.
\end{itemize}
Here, a covering space $X_{f \times \phi}$ of $X$, a zero-dimensional sub-complex $\tilde{x} \subset X_{f \times \phi}$, and a $(G \times \mathbb{Z})$-module $M$ are defined in the detailed proof below.\\[5pt]
{\bf $\text{(i)} \Leftrightarrow \text{(ii)}$.} We fix a base point $x_0 \in X$ and let $p \colon (X_{f \times \phi}, \tilde{x}_0) \to (X, x_0)$ be the covering associated with the right action of $\pi_1(X)$ on $G \times \mathbb{Z}$: Denote $(G \times \mathbb{Z}) \times_{\pi_1(X)} \tilde{X}$ by $X_{f \times \phi}$, where $(\tilde{X}, y_0) \to (X, x_0)$ is the universal covering, and $(e, 0; y_0) \in X_{f \times \phi}$ by $\tilde{x}_0$. We should remark that $X_{f \times \phi}$ is not necessarily connected, and that there is a left action of $G \times \mathbb{Z}$ on $X_{f \times \phi}$. We denote $(e, 1) \in G \times \mathbb{Z}$ and its action by $s$. Defining $tc = s_*c$ for $c \in H_1(X_{f \times \phi})$, we regard the homology group $H_1(X_{f \times \phi})$ as a $\mathbb{Z}[t^{\pm 1}]$-module, which is isomorphic to $H_1(X; \mathbb{Z}[G][t^{\pm 1}])$.
\par We should recall that the order of a finitely generated $R$-module $M$ over a unique factorization domain $R$ is zero if and only if ${\rm Hom}_R(M, R) \neq 0$; the ``if'' part is obvious from the definition of the order, and the ``only-if'' part can be shown by taking a nontrivial element of ${\rm Hom}_R(M, Q(R))$ ($\neq 0$ by, e.g., \cite[Remark 4.5]{Turaev01-1}), where $Q(R)$ denotes the quotient field of $R$, and multiplying it by an appropriate scalar to make the image contained in $R$. Thus, the twisted Alexander polynomial $\Delta^{\rho \circ f}_{X, \phi}(t)$ is zero if and only if ${\rm Hom}_{\mathbb{Z}[t^{\pm 1}]}(H_1(X_{f \times \phi}), \mathbb{Z}[t^{\pm 1}]) \neq 0$.\\[5pt]
{\bf $\text{(ii)} \Leftrightarrow \text{(iii)}$.} Let us set $\tilde{x} = p^{-1}(x_0)$. The implication $\text{(iii)} \Rightarrow \text{(ii)}$ is trivial. To show $\text{(ii)} \Rightarrow \text{(iii)}$, we claim that
$${\rm Hom}_{\mathbb{Z}[t^{\pm 1}]}(H_1(X_{f \times \phi}, \tilde{x}), \mathbb{Z}[t^{\pm 1}]) \to {\rm Hom}_{\mathbb{Z}[t^{\pm 1}]}(H_1(X_{f \times \phi}), \mathbb{Z}[t^{\pm 1}])$$
is surjective. By the long exact sequence for the pair $(X_{f \times \phi}, \tilde{x})$, we have an exact sequence
$$H^0_{\mathbb{Z}}(\tilde{x}; \mathbb{Z}[t^{\pm 1}]) \xrightarrow{\delta} H^1_{\mathbb{Z}}(X_{f \times \phi}, \tilde{x}; \mathbb{Z}[t^{\pm 1}]) \xrightarrow{j} H^1_{\mathbb{Z}}(X_{f \times \phi}; \mathbb{Z}[t^{\pm 1}]) \to 0,$$
where $H^*_{\mathbb{Z}}$ denotes the untwisted cohomology group; e.g., $H^*_{\mathbb{Z}}(X_{f \times \phi}; \mathbb{Z}[t^{\pm 1}])$ is the cohomology of the cochain complex ${\rm Hom}_{\mathbb{Z}}(C_*(X_{f \times \phi}), \mathbb{Z}[t^{\pm 1}])$. Because
\begin{align*}
H^1_{\mathbb{Z}}(X_{f \times \phi}, \tilde{x}; \mathbb{Z}[t^{\pm 1}]) &\cong {\rm Hom}_{\mathbb{Z}}(H_1(X_{f \times \phi}, \tilde{x}), \mathbb{Z}[t^{\pm 1}]) \quad \text{and}\\
H^1_{\mathbb{Z}}(X_{f \times \phi}; \mathbb{Z}[t^{\pm 1}]) &\cong {\rm Hom}_{\mathbb{Z}}(H_1(X_{f \times \phi}), \mathbb{Z}[t^{\pm 1}]),
\end{align*}
it is sufficient to show that $j$ induces an epimorphism between the kernels of $s^* - t$. Let $\alpha \in H^1_{\mathbb{Z}}(X_{f \times \phi}; \mathbb{Z}[t^{\pm 1}])$ satisfy $(s^* - t)\alpha = 0$. Since $j$ is surjective, there exists $\beta_0 \in H^1_{\mathbb{Z}}(X_{f \times \phi}, \tilde{x}; \mathbb{Z}[t^{\pm 1}])$ such that $j(\beta_0) = \alpha$. We have
$$j((s^* - t)\beta_0) = (s^* - t)\alpha =0,$$
and hence there is $\gamma_0 \in H^0_{\mathbb{Z}}(\tilde{x}; \mathbb{Z}[t^{\pm 1}])$ such that $\delta(\gamma_0) = (s^* - t)\beta_0.$ Because $s^* - t$ is surjective in $H^0_{\mathbb{Z}}(\tilde{x}; \mathbb{Z}[t^{\pm 1}])$, we can take $\gamma \in H^0_{\mathbb{Z}}(\tilde{x}; \mathbb{Z}[t^{\pm 1}])$ such that $(s^* - t)\gamma = \gamma_0$. By setting $\beta = \beta_0 - \delta(\gamma),$ we find $j(\beta) = \alpha$ and $(s^* - t)\beta = 0$, as required.\\[5pt]
{\bf $\text{(iii)} \Leftrightarrow \text{(iv)}$.} We define
$$M = \{\text{maps $\xi \colon G \times \mathbb{Z} \to \mathbb{Z}[t^{\pm 1}]$} \mid \text{$\xi(s(g, i)) = t \xi(g,i)$ for any $g \in G, i \in \mathbb{Z}$}\}$$
and regard $M$ as a left $\mathbb{Z}[G \times \mathbb{Z}]$-module by $((g, i) \cdot \xi)(h,j) = \xi((h,j)(g,i))$. We claim that there exists a one-to-one correspondence between ${\rm Hom}_{\mathbb{Z}[t^{\pm 1}]}(H_1(X_{f \times \phi}, \tilde{x}), \mathbb{Z}[t^{\pm 1}])$ and the set of the lifts $\tilde{f} \colon \pi_1(X) \to M \rtimes (G \times \mathbb{Z})$ of $f$. For $\alpha \in {\rm Hom}_{\mathbb{Z}[t^{\pm 1}]}(H_1(X_{f \times \phi}, \tilde{x}), \mathbb{Z}[t^{\pm 1}])$, we define $\tilde{f}_\alpha \colon \pi_1(X) \to M \rtimes (G \times \mathbb{Z})$ by $\tilde{f}_\alpha(\gamma) = (\xi_{\alpha, \gamma}, (f\times\phi)(\gamma))$. Here, we denote the lift of $\gamma$ starting at $\tilde{x}_0$ by $\tilde{\gamma} \colon [0, 1] \to X_{f \times \phi}$ and then define $\xi_{\alpha, \gamma} \in M$ by $\xi_{\alpha, \gamma}(g,i) = \alpha((g,i) \cdot \tilde{\gamma})$, regarding $\tilde{\gamma}$ as representing a homology class of $H_1(X_{f \times \phi}, \tilde{x})$; since $\alpha$ is a homomorphism of $\mathbb{Z}[t^{\pm 1}]$-modules, $\xi_{\alpha, \gamma}$ satisfies the condition $\xi_{\alpha, \gamma}(s(g,i)) = t \xi_{\alpha, \gamma}(g,i)$. For $\gamma, \gamma' \in \pi_1(X_{f \times \phi})$, we have
$$\xi_{\alpha, \gamma\gamma'}(g,i) = \alpha((g,i) \cdot \tilde{\gamma}) + \alpha((g,i) \cdot (f \times \phi)(\gamma) \cdot \tilde{\gamma}')$$
and then find that $\tilde{f}_\alpha$ is a group homomorphism. Conversely, let $\tilde{f} \colon \pi_1(X) \to M \rtimes (G \times \mathbb{Z})$ be a lift of $f$. If $\tilde{f}(\gamma) = (\xi_{\gamma}, (f \times \phi)(\gamma))$ for $\gamma \in \pi_1(X)$, we define $\alpha_{\tilde{f}} \colon H_1(X_{f \times \phi}, \tilde{x}) \to \mathbb{Z}[t^{\pm 1}]$ by $\alpha_{\tilde{f}}((g,i) \cdot \tilde{\gamma}) = \xi_{\gamma}(g,i)$. Again, we can easily check that $\alpha_{\tilde{f}}$ is a well defined homomorphism of $\mathbb{Z}[t^{\pm 1}]$-modules, and that the correspondences $\alpha \mapsto \tilde{f}_\alpha$ and $\tilde{f} \mapsto \alpha_{\tilde{f}}$ are the inverses of each other.
\par In the notation of the previous paragraph, the lifts $(g,i) \cdot \tilde{\gamma}$ of $\gamma \in \pi_1(X)$ are loops if and only if $(f \times \phi)(\gamma) = (e, 0)$, and we should remark that $H_1(X_{f \times \phi})$ is generated by such elements $(g,i) \cdot \tilde{\gamma}$. Thus, the homomorphisms $\alpha \in {\rm Hom}_{\mathbb{Z}[t^{\pm 1}]}(H_1(X_{f \times \phi}, \tilde{x}), \mathbb{Z}[t^{\pm 1}])$ that vanish under the surjection to ${\rm Hom}_{\mathbb{Z}[t^{\pm 1}]}(H_1(X_{f \times \phi}), \mathbb{Z}[t^{\pm 1}])$ correspond to the lifts $\tilde{f} \colon \pi_1(X) \to M \rtimes (G \times \mathbb{Z})$ such that ${\rm Im}\; \tilde{f} \cap (M \times \{(e, 0)\}) = \{(0; e, 0)\}.$\\[5pt]
{\bf $\text{(iv)} \Leftrightarrow \text{(v)}$.} Let us see $M \rtimes (G \times \mathbb{Z}) \cong \mathbb{Z}[G \times \mathbb{Z}] \rtimes (G \times \mathbb{Z})$. In fact, $M \ni \xi \mapsto \sum_{g \in G} \xi(g)g^{-1} \in \mathbb{Z}[t^{\pm 1}][G] \cong \mathbb{Z}[G \times \mathbb{Z}]$ gives an isomorphism $M \cong \mathbb{Z}[G \times \mathbb{Z}]$ between the left $\mathbb{Z}[G \times \mathbb{Z}]$-modules, and hence we have $M \rtimes (G \times \mathbb{Z}) \cong \mathbb{Z}[G \times \mathbb{Z}] \rtimes (G \times \mathbb{Z})$.
\end{proof}

\begin{remark}
Let $A$ be the \textit{Alexander matrix} of a presentation of $\pi_1(X)$ associated to the representation $(\rho\circ f)\otimes\phi \colon \pi_1(X)\to \mathrm{GL}(|G|,\Z[t^{\pm1}])$, which is defined as the Jacobian matrix with respect to the free differential calculus. 
Then, Wada shows in \cite[Propositition 1]{Wada94-1} that there is a natural one-to-one correspondence between the kernel of $A$ and the set of \textit{derivations} of $\pi_1(X)$ with values in a $\Z[\pi_1(X)]$-module 
$\Z[t^{\pm1}]^{|G|}$. 
\end{remark}

\subsection{Proof of Theorem \ref{char-thm}}
Let us consider the case of a knot. By Theorem \ref{lifting-thm}, a finite group $G$ is a TAV group if and only if there exist a knot $K$ and an epimorphism $f \colon G(K) \to G$ such that $f \times \phi$ admits a nontrivial lift. We first see that we may assume some additional conditions on the image of the lift (Lemma \ref{m0-lem}), and then show that the existence of such a special lift is equivalent to the existence of a certain ideal of $\mathbb{Z}[\tilde{G}]$, where $\tilde{G}$ is a subgroup of $G \times \mathbb{Z}$ (Proposition \ref{ideal-lem}). By an algebraic argument on group theory, we find that there exists such an ideal if and only if $[G, G]$ is not a $p$-group (Lemmas \ref{pgp-lem}, \ref{cyclic-lem}, and \ref{metabel-lem}). This is an outline of the proof.
\par Let $K$ be a knot and $m \in G(K)$ a meridian. We assume that a homomorphism $f \colon G(K) \to G$ to a finite group $G$ is surjective and that $\phi(m) = 1$. Let $\tilde{G} \subset G \times \mathbb{Z}$ denote the image of $f \times \phi$, i.e., the subgroup generated by $[G, G] \times \{0\}$ and $(f(m), 1)$. We take a complete representative set $E \subset G \times \mathbb{Z}$ of $\tilde{G} \backslash (G \times \mathbb{Z})$. Since $\mathbb{Z}[G \times \mathbb{Z}] = \bigoplus_{x \in E} \mathbb{Z}[\tilde{G}] x \cong \mathbb{Z}[\tilde{G}]^{|E|}$ as a left $\mathbb{Z}[\tilde{G}]$-module, the group $\mathbb{Z}[G \times \mathbb{Z}] \rtimes \tilde{G} \subset \mathbb{Z}[G \times \mathbb{Z}] \rtimes (G \times \mathbb{Z})$ is isomorphic to $\mathbb{Z}[\tilde{G}]^{|E|} \rtimes \tilde{G}$. Thus, the existence of a nontrivial lift of $f \times \phi \colon G(K) \to \tilde{G}$ to $\mathbb{Z}[G \times \mathbb{Z}] \rtimes \tilde{G}$ is equivalent to that of a nontrivial lift to $\mathbb{Z}[\tilde{G}] \rtimes \tilde{G}$.

\begin{lemma}\label{m0-lem}
If $f \times \phi$ admits a nontrivial lift to $\mathbb{Z}[\tilde{G}] \rtimes \tilde{G}$, there exists a nontrivial lift $\tilde{f} \colon G(K) \to \mathbb{Z}[\tilde{G}] \rtimes \tilde{G}$ such that $\tilde{f}(m) = (0, (f \times \phi)(m))$.
\end{lemma}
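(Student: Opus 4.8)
The plan is to exploit the standard semidirect-product decomposition $G(K) = \mathbb{Z} \ltimes [G(K), G(K)]$, where $\mathbb{Z}$ is generated by the meridian $m$. Given an arbitrary nontrivial lift $\tilde{f}_0 \colon G(K) \to \mathbb{Z}[\tilde{G}] \rtimes \tilde{G}$ of $f \times \phi$, write $\tilde{f}_0(m) = (a, (f(m), 1))$ for some $a \in \mathbb{Z}[\tilde{G}]$. The idea is to \emph{correct} $\tilde{f}_0$ by an inner-type modification so that the $\mathbb{Z}[\tilde{G}]$-component of the image of $m$ becomes $0$, without destroying the lifting property or nontriviality. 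Concretely, I would look for an element $(b, (e,0)) \in \mathbb{Z}[\tilde{G}] \rtimes \tilde{G}$ lying over the identity and set $\tilde{f}(\gamma) = (b, (e,0))\, \tilde{f}_0(\gamma)\, (b, (e,0))^{-1}$, or alternatively twist $\tilde{f}_0$ by a $1$-cocycle supported on the $\mathbb{Z}$-factor; the composition with $p_{G\times\mathbb{Z}}$ is clearly unchanged, so $\tilde{f}$ is still a lift.

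The key computation is to see what conjugation by $(b,(e,0))$ does to $\tilde{f}_0(m)$. In the semidirect product $\mathbb{Z}[\tilde{G}] \rtimes \tilde{G}$ with $\tilde{G}$ acting by left multiplication, one computes
\[
(b,(e,0))\,(a,(f(m),1))\,(b,(e,0))^{-1} = \bigl(a + b - (f(m),1)\cdot b,\ (f(m),1)\bigr),
\]
so the new $\mathbb{Z}[\tilde{G}]$-component of $\tilde{f}(m)$ is $a + (1 - (f(m),1))\,b$. Thus it suffices to solve $(1 - (f(m),1))\,b = -a$ in $\mathbb{Z}[\tilde{G}]$; equivalently, to show $a$ lies in the left ideal generated by $1 - (f(m),1)$. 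Here I would use that $\tilde{f}_0$ \emph{is} a homomorphism: applying it to the defining relations of $G(K)$, in particular using that every element of $[G(K),G(K)]$ is a product of conjugates $w\,m\,w^{-1}\,m^{-1}$-type commutators and that $\phi$ kills the commutator subgroup, the cocycle condition forces the value of $\tilde f_0$ on such commutators to land in the submodule $(1-(f(m),1))\,\mathbb{Z}[\tilde G]$; combining this with the fact that $m$ normally generates $G(K)$ and the relation expressing $m^{-1}(\text{some word})m$ in terms of the generators should pin down $a$ modulo this ideal. In fact the cleanest route is: the augmentation-type obstruction to solving for $b$ is measured by the image of $a$ under the quotient map $\mathbb{Z}[\tilde{G}] \to \mathbb{Z}[\tilde{G}/\langle\langle f(m)\rangle\rangle] = \mathbb{Z}[\{1\}] = \mathbb{Z}$ (since $f(m)$ normally generates $\tilde G$ modulo the $\mathbb Z$-direction appropriately), and this image is determined by, and vanishes by, the group-homomorphism relations on $\tilde f_0$.

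Finally I must check that the corrected lift $\tilde{f}$ is still \textbf{nontrivial}, i.e.\ that $\mathrm{Im}\,\tilde{f} \cap (\mathbb{Z}[\tilde G] \times \{(e,0)\}) \neq \{0\}$. Since conjugation by a \emph{fixed} element $(b,(e,0))$ is an automorphism of $\mathbb{Z}[\tilde G] \rtimes \tilde G$ that preserves the normal subgroup $\mathbb{Z}[\tilde G] \times \{(e,0)\}$, it carries the nontrivial intersection for $\tilde{f}_0$ bijectively to the corresponding intersection for $\tilde{f}$, so nontriviality is automatic. (If instead a cocycle twist is used rather than a conjugation, one checks directly that the twist fixes the subgroup $\mathbb Z[\tilde G]\times\{(e,0)\}$ pointwise, since the cocycle is supported on the meridian.) The main obstacle I anticipate is the second paragraph: showing that the defect $a$ really lies in $(1-(f(m),1))\mathbb{Z}[\tilde G]$, which requires carefully translating the homomorphism relations for $\tilde f_0$ — especially those coming from $[G(K),G(K)]$ being $\phi$-trivial and normally generated by conjugates of a single commutator — into a statement about $a$ in the group ring; everything else is formal semidirect-product bookkeeping.
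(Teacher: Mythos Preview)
Your overall shape is right: you conjugate a given nontrivial lift $\tilde f_0$ by an element $(b,(e,0))$ and correctly compute that $\tilde f_0(m)=(a,\mu)$ becomes $(a+(1-\mu)b,\mu)$, so the problem reduces to solving $(1-\mu)b=-a$ in $\mathbb{Z}[\tilde G]$. The gap is in your claim that this equation is solvable, i.e., that $a$ lies in the right ideal $(1-\mu)\mathbb{Z}[\tilde G]$. Your ``augmentation-type obstruction'' argument misidentifies the quotient: $\mathbb{Z}[\tilde G]/(1-\mu)\mathbb{Z}[\tilde G]$ is \emph{not} $\mathbb{Z}$. Since $\langle\mu\rangle$ is (in general) not normal in $\tilde G$, this quotient is the free abelian group on the coset space $\langle\mu\rangle\backslash\tilde G$, which has $|[G,G]|$ elements, so the obstruction module is $\mathbb{Z}^{|[G,G]|}$, not $\mathbb{Z}$. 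Your remarks about commutators only constrain the values of $\tilde f_0$ on $[G(K),G(K)]$, not the value $a$ at the meridian itself, and the Wirtinger relations relate the various $a_i$ modulo ideals $(1-\mu_i)\mathbb{Z}[\tilde G]$ with \emph{different} $\mu_i$; nothing in your sketch pins $a$ down inside $(1-\mu)\mathbb{Z}[\tilde G]$.

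The paper's proof sidesteps exactly this obstruction. Let $n$ be the order of $f(m)$, so $\mu^n=(e,n)$ is central in $\tilde G$ and $\mu^n-1$ is a central non-zero-divisor in $\mathbb{Z}[\tilde G]$. One first applies the injective endomorphism $\iota(\eta;g,i)=((\mu^n-1)\eta;g,i)$ of $\mathbb{Z}[\tilde G]\rtimes\tilde G$ to $\tilde f_0$; injectivity guarantees $\iota\circ\tilde f_0$ is still a nontrivial lift. Its value at $m$ is $((\mu^n-1)a,\mu)$, and now the first component \emph{is} in $(1-\mu)\mathbb{Z}[\tilde G]$ automatically, since $\mu^n-1=-(1-\mu)\sum_{j=0}^{n-1}\mu^j$. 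Conjugating by $(\eta',(e,0))$ with $\eta'=\sum_{j=0}^{n-1}\mu^j a$ then kills the first component exactly. So the missing idea is this preliminary rescaling by $\mu^n-1$: it trades the unsolvable equation $(1-\mu)b=-a$ for the solvable one $(1-\mu)b=-(\mu^n-1)a$, at the cost of passing to a different (but still nontrivial) lift. Pure conjugation, or a cocycle twist supported on the meridian, cannot move $a$ out of its coset modulo $(1-\mu)\mathbb{Z}[\tilde G]$, which is why your approach as written does not close.
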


\begin{proof}
Let $n$ be the order of $f(m) \in G$ and denote $(f \times \phi)(m)$ by $\mu \in \tilde{G}$. Since $\mu^n - 1$ is central in $\mathbb{Z}[\tilde{G}]$ and is not a zero-divisor, the map $\iota \colon \mathbb{Z}[\tilde{G}] \rtimes \tilde{G} \to \mathbb{Z}[\tilde{G}] \rtimes \tilde{G}$ defined by $\iota(\eta; g, i) = ((\mu^n - 1)\eta; g, i)$ for $\eta\in\Z[\tilde{G}],(g, i) \in \tilde{G}$ is an injective group homomorphism.
\par Let $\tilde{f}_0 \colon G(K) \to \mathbb{Z}[\tilde{G}] \rtimes \tilde{G}$ be a nontrivial lift. If $\tilde{f}_0(m) = (\eta, \mu)$, we set $\eta' = \sum_{j=0}^{n-1} \mu^j \eta \in \mathbb{Z}[\tilde{G}]$ and define $\tilde{f} \colon G(K) \to \mathbb{Z}[\tilde{G}] \rtimes \tilde{G}$ by
$$\tilde{f}(\gamma) = (\eta'; e, 0) \cdot (\iota \circ \tilde{f}_0)(\gamma) \cdot (\eta'; e, 0)^{-1}.$$
As $\iota$ is injective, $\tilde{f}$ is a nontrivial lift of $f \times \phi$. Furthermore, we have
$$\tilde{f}(m) = ((\mu^n - 1)\eta + (1 - \mu)\eta', \mu) = ((\mu^n - 1)\eta + (1 - \mu^n)\eta, \mu) = (0, \mu),$$
as required.
\end{proof}

\par Let $G$ be a finite group normally generated by a single element $g_0 \in G$. To see whether there exist a knot $K$ and an epimorphism $G(K) \to G$ that takes a meridian to $g_0$ such that the twisted Alexander polynomial vanishes, we define $\tilde{G} \subset G \times \mathbb{Z}$ to be the subgroup generated by $[G, G] \times \{0\}$ and $\mu = (g_0, 1)$, and examine whether a nontrivial finitely generated subgroup $\overline{G} \subset \mathbb{Z}[\tilde{G}] \rtimes \tilde{G}$ that is normally generated by a single element exists, where ``nontrivial'' means that the intersection with $[G, G] \times \{0\}$ is not trivial.
\par By Lemma \ref{m0-lem}, we may assume that $\overline{G}$ contains and is normally generated by $(0,\mu)$. Furthermore, we may assume that $\{0\} \times \tilde{G} \subset \overline{G}$. In fact, an epimorphism onto $\tilde{G}$ always admits a trivial lift to $\{0\} \times \tilde{G} \subset \mathbb{Z}[\tilde{G}] \rtimes \tilde{G}$, and the connected sum of a knot with a nontrivial lift and one with a trivial lift yields a knot with a nontrivial lift, whose image contains $\{0\} \times \tilde{G}$.
\par For a group $H$, let $I_H \subset \mathbb{Z}[H]$ denote the \textit{augmentation ideal}, i.e., the ideal spanned by the elements $h - 1$ for $h \in H$. Assuming $\overline{G} \supset \{0\} \times \tilde{G}$, we find that there exists a left ideal $I$ of $\mathbb{Z}[\tilde{G}]$ such that $\overline{G} = I \rtimes \tilde{G}$. By an elementary calculation, we also find that $\overline{G}$ is finitely generated if and only if $I$ is finitely generated as an ideal, and that the normal generation by $(0, \mu)$ is equivalent to the condition $I_{\tilde{G}}I = I$. To summarize,

\begin{proposition}\label{ideal-lem}
There exist a knot $K$ and an epimorphism $f \colon G(K) \to G$ such that $f$ takes a meridional loop to $g_0$ and $\Delta_K^{\rho \circ f}(t) = 0$ if and only if there exists a finitely generated nonzero left ideal $I$ of $\mathbb{Z}[\tilde{G}]$ such that $I_{\tilde{G}} I = I$.
\end{proposition}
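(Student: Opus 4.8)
The plan is to make precise the discussion preceding the statement, deducing Proposition~\ref{ideal-lem} from Theorem~\ref{lifting-thm}, Lemma~\ref{m0-lem}, and the realization theorem of Gonz\'{a}lez-Acu\~{n}a and Johnson (\cite{GA75-1}, \cite{Johnson80-1}): a finitely generated group $H$ is a quotient of some knot group by an epimorphism carrying a meridian to a prescribed $h\in H$ if and only if $H$ is normally generated by $h$. By Theorem~\ref{lifting-thm}, for a knot $K$ and an epimorphism $f\colon G(K)\to G$ with $f(m)=g_0$ and $\phi$ the abelianization, the condition $\Delta_K^{\rho\circ f}(t)=0$ is equivalent to the existence of a nontrivial lift of $f\times\phi$ to $\mathbb{Z}[G\times\mathbb{Z}]\rtimes(G\times\mathbb{Z})$. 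Since the image of $f\times\phi$ is $\tilde{G}$, such a lift lands in $\mathbb{Z}[G\times\mathbb{Z}]\rtimes\tilde{G}\cong\mathbb{Z}[\tilde{G}]^{|E|}\rtimes\tilde{G}$; projecting onto the free summand on which its nontrivial part is supported (and, in the other direction, including a single summand) shows that a nontrivial lift there exists iff one exists into $\mathbb{Z}[\tilde{G}]\rtimes\tilde{G}$, and by Lemma~\ref{m0-lem} we may require it to send $m$ to $(0,\mu)$, where $\mu=(g_0,1)$. So it remains to set up a dictionary between such lifts and left ideals of $\mathbb{Z}[\tilde{G}]$.

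For the forward implication I would take a nontrivial lift $\tilde{f}$ with $\tilde{f}(m)=(0,\mu)$ and put $\overline{G}=\mathrm{Im}\,\tilde{f}$, which is finitely generated and normally generated by $(0,\mu)$. Replacing $K$ by its connected sum with a knot surjecting onto $\tilde{G}$ (with meridian $\mapsto\mu$; possible once one knows $\tilde{G}$ is normally generated by $\mu$, see the last paragraph) and using the trivial lift onto $\{0\}\times\tilde{G}$ on the new factor, I may arrange $\overline{G}\supseteq\{0\}\times\tilde{G}$ while keeping $\tilde{f}$ a nontrivial lift with $\tilde{f}(m)=(0,\mu)$. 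Then $I:=\{\eta\in\mathbb{Z}[\tilde{G}]\mid(\eta,(e,0))\in\overline{G}\}$ is an additive subgroup stable under left multiplication by $\tilde{G}$, hence a left ideal with $\overline{G}=I\rtimes\tilde{G}$; nontriviality of $\tilde{f}$ gives $I\neq 0$; finite generation of $\overline{G}$ (together with that of $\tilde{G}$) forces $I$ to be finitely generated as an ideal; and computing the $\overline{G}$-conjugates of $(0,\mu)$ shows the normal closure of $(0,\mu)$ in $I\rtimes\tilde{G}$ is exactly $(I_{\tilde{G}}I)\rtimes\tilde{G}$, so normal generation yields $I_{\tilde{G}}I=I$.

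For the converse, starting from a finitely generated nonzero left ideal $I$ with $I_{\tilde{G}}I=I$, I would set $\overline{G}=I\rtimes\tilde{G}\subseteq\mathbb{Z}[\tilde{G}]\rtimes\tilde{G}$; it is finitely generated, and the same normal-closure computation, together with the fact that $\tilde{G}$ is normally generated by $\mu$, shows $\overline{G}$ is normally generated by $(0,\mu)$. The realization theorem then supplies a knot $K$ and an epimorphism $G(K)\to\overline{G}$ with meridian $m\mapsto(0,\mu)$. Composing with $\overline{G}\to\tilde{G}\to G$ gives an epimorphism $f$ with $f(m)=g_0$; composing with $\overline{G}\to\tilde{G}\to\mathbb{Z}$ recovers $\phi$; and $G(K)\to\overline{G}\hookrightarrow\mathbb{Z}[G\times\mathbb{Z}]\rtimes(G\times\mathbb{Z})$ is a lift of $f\times\phi$ whose image meets $\mathbb{Z}[\tilde{G}]\times\{(e,0)\}$ in $I\neq 0$, hence is nontrivial, so $\Delta_K^{\rho\circ f}(t)=0$ by Theorem~\ref{lifting-thm}.

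The step I expect to be the main obstacle is the one invoked implicitly in both directions: that $\tilde{G}$ is itself normally generated by $\mu$ (needed to apply the realization theorem to $\tilde{G}$ in the connected-sum step and to identify the normal closure of $(0,\mu)$ in $I\rtimes\tilde{G}$). I would establish it by first showing $[\tilde{G},\tilde{G}]=[G,G]\times\{0\}$ — so that $\tilde{G}^{\mathrm{ab}}\cong\mathbb{Z}$ is generated by the class of $\mu$ — and then appealing to the elementary fact that a group normally generated by an element that centralizes its commutator subgroup is cyclic; it is precisely in the first reduction that the hypothesis that $G$ is normally generated by $g_0$ enters. The remaining verifications — that the connected-sum reduction is compatible with Lemma~\ref{m0-lem} and with the meridian conditions, and that ``nontrivial lift into $\mathbb{Z}[G\times\mathbb{Z}]\rtimes\tilde{G}$'' really does reduce to ``nontrivial lift into $\mathbb{Z}[\tilde{G}]\rtimes\tilde{G}$'' via a single coordinate projection — are routine but worth spelling out.
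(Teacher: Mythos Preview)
Your argument is correct and is precisely the paper's route: the proposition is stated there as a summary of the preceding discussion, and your write-up fleshes out exactly those steps (Theorem~\ref{lifting-thm}, the coset splitting $\mathbb{Z}[G\times\mathbb{Z}]\cong\mathbb{Z}[\tilde G]^{|E|}$, Lemma~\ref{m0-lem}, the connected-sum enlargement to $\{0\}\times\tilde G\subset\overline G$, and the translation $\overline G=I\rtimes\tilde G\leftrightarrow$ ideal conditions, with the Gonz\'alez-Acu\~na--Johnson realization theorem supplying the knot in the converse).

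One correction to the part you flagged as the main obstacle. Your proposed algebraic route---show $[\tilde G,\tilde G]=[G,G]\times\{0\}$, hence $\tilde G^{\mathrm{ab}}\cong\mathbb{Z}=\langle\bar\mu\rangle$, then invoke ``a group normally generated by an element that centralizes its commutator subgroup is cyclic''---does not establish what you need: that lemma is true, but it has no evident bearing on showing $\tilde G$ is normally generated by $\mu$, and the abelianization information alone is insufficient (a perfect finite quotient could in principle survive). The clean argument is already in your hands and is what the paper uses implicitly: since $G$ is normally generated by $g_0$, the realization theorem yields some knot $K'$ and epimorphism $f'\colon G(K')\to G$ with $f'(m')=g_0$; then $f'\times\phi'\colon G(K')\to\tilde G$ is surjective with $m'\mapsto\mu$, and because $G(K')$ is normally generated by its meridian, $\tilde G$ is normally generated by $\mu$. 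This same $K'$ (with the trivial lift) is also the knot you connect-sum with, so no separate verification is needed there.
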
 

\par Let $G_0$ denote the subgroup $[G, G] \times \{0\}$ of $\tilde{G}$, which is equal to the commutator subgroup of $\tilde{G}$ and is isomorphic to $[G, G]$. For a subgroup $H$ of $G_0$, let $\tilde{H}$ denote the subgroup of $\tilde{G}$ generated by $H$ and $\mu$.

\begin{lemma}\label{pgp-lem}
Assume that $[G, G]$ is a $p$-group. If a left ideal $I$ of $\mathbb{Z}[\tilde{G}]$ satisfies $I_{\tilde{G}}I = I$, then $I = 0$.
\end{lemma}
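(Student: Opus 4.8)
The plan is to prove that $I$ lies in every power of the two‑sided ideal $I_{G_0}\mathbb{Z}[\tilde G]\subset\mathbb{Z}[\tilde G]$, and then to use the $p$‑group hypothesis to conclude $\bigcap_k I_{G_0}^k\mathbb{Z}[\tilde G]=0$, which forces $I=0$. The reason for working with the $I_{G_0}$‑adic filtration, rather than trying to apply Nakayama's lemma to $I_{\tilde G}$ directly, is that $I_{\tilde G}$ is not contained in the Jacobson radical of $\mathbb{Z}[\tilde G]$; passing to the graded pieces is what repairs this.

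Concretely, write $\tilde G=G_0\rtimes\langle\mu\rangle$ with $\langle\mu\rangle\cong\mathbb{Z}$, set $R=\mathbb{Z}[\tilde G]$, and let $\theta$ be the automorphism of $\mathbb{Z}[G_0]$ given by conjugation by $\mu$. Since $\theta$ preserves $I_{G_0}$, the sets $I_{G_0}^kR=RI_{G_0}^k$ are two‑sided ideals, $R=\bigoplus_{i\in\mathbb{Z}}\mathbb{Z}[G_0]\mu^i$ as a $\mathbb{Z}[G_0]$‑bimodule, $I_{G_0}^kR=\bigoplus_i I_{G_0}^k\mu^i$, and $R/I_{G_0}R\cong\mathbb{Z}[t^{\pm1}]$ via $\mu\mapsto t$. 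I would prove $I\subseteq I_{G_0}^kR$ by induction on $k$, the case $k=0$ being trivial. For the step, put $W=I_{G_0}^k/I_{G_0}^{k+1}$ and $V_k=I_{G_0}^kR/I_{G_0}^{k+1}R=\bigoplus_{i\in\mathbb{Z}}W\mu^i$, a left $\mathbb{Z}[t^{\pm1}]$‑module on which $t$ acts by the twisted shift $w\mu^i\mapsto\bar\theta(w)\mu^{i+1}$, where $\bar\theta$ is the automorphism of $W$ induced by $\theta$. Let $N\subseteq V_k$ be the image of $I$, which makes sense by the inductive hypothesis. Applying the ring map $R\to R/I_{G_0}^{k+1}R$ to $I=I_{\tilde G}I$, and using that $I_{\tilde G}$ is generated as a left ideal by $\{h-1\mid h\in G_0\}\cup\{\mu-1\}$ while each $h-1\in I_{G_0}$ annihilates $V_k$, one obtains $N=(t-1)N$ (with $t$ now denoting the image of $\mu$).

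It remains to see that $N=(t-1)N$ forces $N=0$; iterating gives $N=(t-1)^jN$ for every $j$, so it suffices to prove $\bigcap_{j\ge0}(t-1)^jV_k=0$. For this I would assign to a nonzero $x=\sum_i w_i\mu^i\in V_k$ its width, the difference between the largest and the smallest $i$ with $w_i\ne0$, and verify the key identity: applying $t-1$ raises the width by exactly one. The only thing that could go wrong is cancellation of the top coefficient, and this is prevented precisely because $\bar\theta$ is invertible. Hence every element of $(t-1)^jV_k$ has width at least $j$, the intersection is zero, $N=0$, and the induction is complete. Finally, because $G_0\cong[G,G]$ is a finite $p$‑group, the augmentation ideal of $\mathbb{F}_p[G_0]$ is nilpotent, so $I_{G_0}^m\subseteq p\,\mathbb{Z}[G_0]$ for some $m$; thus $I_{G_0}^{km}\subseteq p^k\mathbb{Z}[G_0]$ and $\bigcap_k I_{G_0}^kR\subseteq\bigcap_k p^kR=0$, which together with the induction yields $I=0$.

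The main obstacle, and the step I expect to require the most care, is the reduction to the graded pieces $V_k$ together with the width estimate: the naive attempt to cancel $I_{\tilde G}$ by a Nakayama‑type argument fails, and one must recognize that the genuinely twisted nature of the $t$‑action on $V_k$ (coming from conjugation by $\mu$) is exactly what makes $\bigcap_j(t-1)^jV_k$ vanish.
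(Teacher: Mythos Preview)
Your proof is correct and follows essentially the same route as the paper's: filter by powers of $J=I_{G_0}\mathbb{Z}[\tilde G]$, reduce on each graded piece $J^k/J^{k+1}$ to the statement that $N=(t-1)N$ forces $N=0$, and invoke the $p$-group hypothesis for $\bigcap_k J^k=0$. The only differences are cosmetic---the paper untwists $J^k/J^{k+1}$ to $\mathbb{Z}[t^{\pm1}]\otimes(I_{G_0}^k/I_{G_0}^{k+1})$ rather than running your width argument, and cites Gruenberg's Theorem~B in place of your direct nilpotence computation---so your closing remark that the twist by $\bar\theta$ is ``exactly what makes'' $\bigcap_j(t-1)^jV_k$ vanish is a slight overstatement (the width count works identically when $\bar\theta=\mathrm{id}$), though this does not affect the validity of the argument.
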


\begin{proof}
Let $J$ denote the ideal $\mathbb{Z}[\tilde{G}]I_{G_0}$. Since $G_0$ is a normal subgroup of $\tilde{G}$, we have $J^n = \mathbb{Z}[\tilde{G}] I_{G_0}^n$. As $G_0$ is a finite $p$-group, $\bigcap_{i = 0}^\infty I_{G_0}^i = 0 $ by \cite[Theorem B]{gru} and hence $\bigcap_{i=0}^\infty J^i = 0$.
\par Suppose that there exists a left ideal $I \neq 0$ of $\mathbb{Z}[\tilde{G}]$ such that $I_{\tilde{G}}I = I$. Since $I \neq 0$, there exists $n \geq 0$ such that $I \subset J^n$ but $I \not\subset J^{n+1}$. Let $I'$ denote the left $\mathbb{Z}[\tilde{G}]$-module $(I + J^{n+1})/J^{n+1}$, which is a submodule of $J^n/J^{n+1}$. We should remark that $J^n/J^{n+1}$ is isomorphic to $\mathbb{Z}[\tilde{G}/G_0] \otimes_{\mathbb{Z}} (I_{G_0}^n/I_{G_0}^{n+1})$ as a module of $\mathbb{Z}[\tilde{G}]/J \cong \mathbb{Z}[\tilde{G}/G_0]$ and that $I_{\tilde{G}}/J = (I_{\langle \mu \rangle} + J)/J \cong I_{\tilde{G}/G_0}$. Since $\tilde{G}/G_0$ is an infinite cyclic group, we can identify $\mathbb{Z}[\tilde{G}/G_0]$ with $\mathbb{Z}[t^{\pm 1}]$ and then $I_{\tilde{G}}^i J^n/J^{n+1} \cong (t-1)^i \mathbb{Z}[t^{\pm 1}] \otimes_{\mathbb{Z}} (I_{G_0}^n/I_{G_0}^{n+1})$. Thus, we have $\bigcap_{i=0}^\infty I_{\tilde{G}}^i I' \subset \bigcap_{i=0}^\infty I_{\tilde{G}}^iJ^n/J^{n+1} = 0$, which implies that $\bigcap_{i=0}^\infty I_{\tilde{G}}^i I \subset J^{n+1}$; this is a contradiction.
\end{proof}

\par Suppose that $[G, G]$ is not a $p$-group and let $H \subset G_0$ be a minimal subgroup not having prime-power order. By \cite[Lemma 1]{gru}, $H$ is

\begin{itemize}
\item[(i)] a cyclic group of order $pq$ for some distinct primes $p, q$, or
\item[(ii)] a non-abelian group of order $p^n q$ for distinct primes $p,q$ and a positive integer $n$, where the Sylow $p$-subgroup $P$ of $H$ is a minimal normal subgroup while the Sylow $q$-subgroups are maximal.
\end{itemize}

\noindent We remark that in the case of (ii) the subgroup $P$ is an abelian group isomorphic to $\mathbb{Z}_p^n$.

\begin{lemma}\label{cyclic-lem}
In the case of {\rm(i)}, there exists a finitely generated nonzero ideal $I$ of $\mathbb{Z}[\tilde{H}]$ such that $I_{\tilde{H}} I = I$.
\end{lemma}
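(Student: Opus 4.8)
The plan is to exhibit the ideal $I$ explicitly as a principal left ideal of $\mathbb{Z}[\tilde H]$, generated by an element arising from the cyclotomic factorization of $X^{pq}-1$. First I would fix a generator $c$ of $H\cong\mathbb{Z}_{pq}$, so that $c$ has order $pq$ and the subring $\mathbb{Z}[H]\subseteq\mathbb{Z}[\tilde H]$ is the image of the evaluation homomorphism $\mathbb{Z}[X]\to\mathbb{Z}[\tilde H]$, $X\mapsto c$, whose kernel contains $X^{pq}-1=\prod_{d\mid pq}\Phi_d(X)=\Phi_1(X)\Phi_p(X)\Phi_q(X)\Phi_{pq}(X)$. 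The one external input I would use is the elementary fact that $\Phi_n(1)=1$ whenever $n$ has at least two distinct prime divisors, while $\Phi_{p^k}(1)=p$. This dichotomy is exactly where the hypothesis that $H$ is not of prime-power order enters, and it is the reason the construction works here (and, correspondingly, fails in the situation of Lemma~\ref{pgp-lem}).

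With this in hand I would set $\omega:=\bigl(X^{pq}-1\bigr)/\Phi_{pq}(X)=\Phi_1(X)\Phi_p(X)\Phi_q(X)\in\mathbb{Z}[X]$, a monic polynomial of degree $p+q-1<pq$, and regard both $\omega$ and $a:=1-\Phi_{pq}(c)$ as elements of $\mathbb{Z}[H]\subseteq\mathbb{Z}[\tilde H]$. Since $1,c,\dots,c^{p+q-1}$ are pairwise distinct group elements, the image of $\omega$ is nonzero in $\mathbb{Z}[\tilde H]$. Because $\Phi_{pq}(1)=1$, the element $a$ has trivial augmentation, so $a\in I_H\subseteq I_{\tilde H}$; and evaluating the polynomial identity $\Phi_{pq}(X)\cdot\omega=X^{pq}-1$ at $X=c$ gives $(1-a)\,\omega=0$ in $\mathbb{Z}[\tilde H]$, that is, $\omega=a\omega$.

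I would then take $I:=\mathbb{Z}[\tilde H]\,\omega$, which is finitely generated (by a single element) and nonzero, and which is the sort of left ideal needed in Proposition~\ref{ideal-lem}. For the identity $I_{\tilde H}I=I$, the relation $\omega=a\omega$ gives $r\omega=(ra)\omega$ for every $r\in\mathbb{Z}[\tilde H]$, and $ra\in I_{\tilde H}$ since $I_{\tilde H}$ is a two-sided ideal containing $a$; hence $I=\mathbb{Z}[\tilde H]\omega\subseteq I_{\tilde H}\omega\subseteq I_{\tilde H}I$, and the reverse inclusion is trivial. The construction is direct, so I do not expect a real obstacle; the only points needing care are the cyclotomic evaluation $\Phi_{pq}(1)=1$ together with the remark that it fails precisely for prime powers, and the small bookkeeping showing that $\omega$ stays nonzero in $\mathbb{Z}[\tilde H]$. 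I note, finally, that the infinite cyclic factor $\langle\mu\rangle$ of $\tilde H$ plays no role: the relation $\omega=a\omega$ already holds in $\mathbb{Z}[H]$ and is merely transported along the inclusion $\mathbb{Z}[H]\hookrightarrow\mathbb{Z}[\tilde H]$.
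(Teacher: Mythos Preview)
Your proof is correct and uses the same ideal as the paper: both take $I=\mathbb{Z}[\tilde H]\,\omega$ with $\omega=\Phi_1(c)\Phi_p(c)\Phi_q(c)$ for a generator $c$ of $H$. The only difference is in the verification of $I_{\tilde H}I=I$: the paper invokes Gruenberg's result that $\bigcap_i I_H^i$ is generated by $\omega$ and that $c-1$ acts invertibly on it, whereas you argue directly from the factorization $\Phi_{pq}(c)\,\omega=c^{pq}-1=0$ together with $\Phi_{pq}(1)=1$ to produce the relation $\omega=a\omega$ with $a=1-\Phi_{pq}(c)\in I_H$. Your route is more self-contained and makes transparent exactly where the hypothesis ``$pq$ not a prime power'' is used (namely $\Phi_{pq}(1)=1$), at the cost of not identifying $\mathbb{Z}[H]\omega$ with $\bigcap_i I_H^i$ --- information that is not needed for the lemma anyway.
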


\begin{proof}
Let $h$ be a generator of $H$. As shown in \cite{gru}, the intersection $\bigcap_{i=0}^\infty I_H^i$ is equal to the ideal $J$ of $\mathbb{Z}[H]$ generated by $\varphi_1(h) \varphi_p(h) \varphi_q(h)$, where $\varphi_d(h)$ is the $d$-th cyclotomic polynomial, and the multiplication of $h - 1$ is invertible in $J$; in particular, we have $I_H J = J$. The ideal $I = \mathbb{Z}[\tilde{H}] J$ of $\mathbb{Z}[\tilde{H}]$ is clearly finitely generated and satisfies
$$I_{\tilde{H}} I \supset \mathbb{Z}[\tilde{H}] I_H J = \mathbb{Z}[\tilde{H}] J = I,$$
i.e., $I_{\tilde{H}} I = I$.
\end{proof}

\begin{lemma}\label{metabel-lem}
In the case of {\rm(ii)}, there exists a finitely generated nonzero ideal $I$ of $\mathbb{Z}[\tilde{H}]$ such that $I_{\tilde{H}} I = I$.
\end{lemma}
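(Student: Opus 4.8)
The plan is to follow the pattern of the proof of Lemma~\ref{cyclic-lem}: reduce to constructing a suitable ideal inside $\mathbb{Z}[H]$ and then exhibit one explicitly. First, it suffices to produce a nonzero left ideal $J$ of $\mathbb{Z}[H]$ with $I_H J = J$ (finite generation is automatic, since $\mathbb{Z}[H]$ is module-finite over $\mathbb{Z}$, hence Noetherian): for such a $J$, the left ideal $I := \mathbb{Z}[\tilde{H}]\,J$ of $\mathbb{Z}[\tilde{H}]$ is finitely generated and nonzero, and since $I_H \subseteq I_{\tilde{H}}$ with $I_{\tilde{H}}$ two-sided we get $I_{\tilde{H}} I \supseteq \mathbb{Z}[\tilde{H}]\,I_H J = \mathbb{Z}[\tilde{H}]\,J = I$, so $I_{\tilde{H}} I = I$, exactly as in the final step of the proof of Lemma~\ref{cyclic-lem}.

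To construct $J$ I would use the structure of $H$: $P \trianglelefteq H$ is elementary abelian with $P \cong \mathbb{Z}_p^n$, and $|H/P| = q$, so $H/P$ is abelian and $[H,H] \subseteq P$; as $H$ is non-abelian, $[H,H]$ is a nontrivial normal subgroup of $H$ inside the minimal normal subgroup $P$, hence $[H,H] = P$ and $H^{\mathrm{ab}} \cong \mathbb{Z}_q$. Put $A := I_P\,\mathbb{Z}[H]$, a two-sided ideal (as $P \trianglelefteq H$) with $\mathbb{Z}[H]/A \cong \mathbb{Z}[H/P]$. The key point is the computation of the coinvariants $A/I_H A = H_0(H;A)$: apply the long exact homology sequence to $0 \to A \to \mathbb{Z}[H] \to \mathbb{Z}[H/P] \to 0$, using $H_{\ast}(H;\mathbb{Z}[H]) = 0$ for $\ast > 0$ and, via $\mathbb{Z}[H/P] \cong \mathrm{Ind}_P^H\mathbb{Z}$ and Shapiro's lemma, $H_{\ast}(H;\mathbb{Z}[H/P]) \cong H_{\ast}(P;\mathbb{Z})$; the sequence collapses to an isomorphism $A/I_H A \cong H_1(P;\mathbb{Z}) = P \cong \mathbb{Z}_p^n$. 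In particular $A/I_H A$ is a finite $p$-group, annihilated by $p$, carrying the trivial $H$-action (coinvariant modules always do).

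Now set $J := I_H A$. It is a (two-sided, in particular left) ideal, and it is nonzero because $A/I_H A$ is $\mathbb{Z}$-torsion, so $\mathrm{rank}_{\mathbb{Z}} J = \mathrm{rank}_{\mathbb{Z}} A = q(p^n-1) > 0$. For $I_H J = J$, i.e.\ $I_H^2 A = I_H A$: the multiplication map $I_H \otimes_{\mathbb{Z}[H]} A \to \mathbb{Z}[H]$ is onto $I_H A$, and composing with $I_H A \to I_H A/I_H^2 A$ kills the image of $I_H \otimes_{\mathbb{Z}[H]} I_H A$, so by right exactness it factors through a surjection $I_H \otimes_{\mathbb{Z}[H]} (A/I_H A) \twoheadrightarrow I_H A/I_H^2 A$. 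Since $A/I_H A \cong (\mathbb{Z}/p)^{\oplus n}$ with trivial action, it suffices that $I_H \otimes_{\mathbb{Z}[H]} \mathbb{Z}/p = 0$; but $I_H \otimes_{\mathbb{Z}[H]} \mathbb{Z}/p \cong I_H/(I_H^2 + pI_H) \cong H^{\mathrm{ab}}/p\,H^{\mathrm{ab}} = \mathbb{Z}_q/p\mathbb{Z}_q = 0$, because $p$ is invertible modulo $q$. Hence $I_H A/I_H^2 A = 0$, and $J = I_H A$ (so also $I = \mathbb{Z}[\tilde{H}]\,J$) has all the required properties.

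The only non-formal ingredient is the choice of $J$. The obvious candidates fail or are awkward: $A = I_P\mathbb{Z}[H]$ itself does not satisfy $I_H A = A$ (the obstruction $H_0(H;A) \cong \mathbb{Z}_p^n$ is nonzero), while showing directly that $\bigcap_n I_H^n$ — the ideal used in the cyclic case of Lemma~\ref{cyclic-lem} — is nonzero seems to require essentially the same construction. What rescues $I_H A$ is an arithmetic mismatch between the two primes: the obstruction $A/I_H A$ is a $p$-group, whereas $I_H \otimes_{\mathbb{Z}[H]} \mathbb{Z} = H^{\mathrm{ab}}$ is a $q$-group with $q \ne p$, so one more application of $I_H \otimes_{\mathbb{Z}[H]}(-)$ annihilates it and makes $I_H A$ already $I_H$-stable.
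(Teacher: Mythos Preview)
Your proof is correct, and in fact your ideal $J = I_H A$ coincides with the paper's $J = I_H I_P$ (since $I_H$ is two-sided and $A = \mathbb{Z}[H]I_P = I_P\mathbb{Z}[H]$). The overall architecture---reduce to $\mathbb{Z}[H]$, exhibit a nonzero $J$ with $I_H J = J$, then extend to $\mathbb{Z}[\tilde{H}]$---is identical to the paper's. Where you differ is in the verification of $I_H J = J$. The paper invokes Gruenberg's Theorem~B to identify $\bigcap_i I_H^i = \mathbb{Z}[H]I_P$, passes to the quotient by $\mathbb{Z}[H]I_P^2$, and computes $I_H^k I_P/\mathbb{Z}[H]I_P^2$ explicitly as $((x-1)^k\mathbb{Z}_p[x]/(x^q-1))^n$, finishing with the observation that $x-1$ is invertible in $\mathbb{Z}_p[x]/(\varphi_q(x))$. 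You instead compute the coinvariants $A/I_H A \cong \mathbb{Z}_p^n$ directly from the homology long exact sequence and Shapiro's lemma, and then kill the obstruction $I_H A/I_H^2 A$ via the surjection from $I_H \otimes_{\mathbb{Z}[H]}(A/I_H A)$ together with $I_H/I_H^2 \cong H^{\mathrm{ab}} \cong \mathbb{Z}_q$. Both arguments are driven by the same arithmetic fact that $p$ is invertible modulo $q$; yours is a bit more homological and, notably, self-contained in that it does not appeal to Gruenberg's description of $\bigcap_i I_H^i$.
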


\begin{proof}
Gruenberg \cite[Theorem B]{gru} shows that $I_P \subset \bigcap_{i=0}^\infty I_H^i$, and in the proof it is stated that $\mathbb{Z}[H] I_P = \bigcap_{i=0}^\infty I_H^i$; in fact, we can verify $\mathbb{Z}[H] I_P \supset \bigcap_{i=0}^\infty I_H^i$ by projecting both sides to $\mathbb{Z}[H/P]$.
\par Let $J$ be the ideal $I_H I_P$ of $\mathbb{Z}[H]$. We claim that $I_H J = J$. To see this, we remark that $I_H^k I_P \supset (\bigcap_{i=0}^\infty I_H^i) I_P = \mathbb{Z}[H] I_P^2$ for any $k$. We can regard $I_H^k I_P/\mathbb{Z}[H]I_P^2$ as a left module of $\mathbb{Z}[H]/\mathbb{Z}[H]I_P \cong \mathbb{Z}[H/P] \cong \mathbb{Z}[x]/(x^q -1)$. Since $I_P/ I_P^2 \cong P \cong \mathbb{Z}_p^n$,
$$I_H^k I_P/\mathbb{Z}[H]I_P^2 \cong (x-1)^k\mathbb{Z}[x]/(x^q - 1) \otimes \mathbb{Z}_p^n \cong ((x-1)^k\mathbb{Z}_p[x]/(x^q-1))^n.$$
There is an injective homomorphism $\mathbb{Z}_p[x]/(x^q-1) \to \mathbb{Z}_p[x]/(x-1) \oplus \mathbb{Z}_p[x]/(\varphi_q(x))$ and $x-1$ is zero in $\mathbb{Z}_p[x]/(x-1)$ and invertible in $\mathbb{Z}_p[x]/(\varphi_q(x))$. Thus, we find $I_H I_P/\mathbb{Z}[H]I_P^2 = I_H^2 I_P/\mathbb{Z}[H]I_P^2$, i.e., $I_H J = J$ as claimed.
\par As in the proof of Lemma \ref{cyclic-lem}, $I = \mathbb{Z}[\tilde{H}]J$ is a finitely generated nonzero ideal of $\mathbb{Z}[\tilde{H}]$ such that $I_{\tilde{H}} I = I$.
\end{proof}

\begin{proof}[Proof of Theorem \ref{char-thm}]
Let $g_0 \in G$ be any element normally generating $G$ and define $\tilde{G}$ as above. By Proposition \ref{ideal-lem}, it is sufficient to determine when there exists a finitely generated nonzero left ideal $I$ of $\mathbb{Z}[\tilde{G}]$ such that $I_{\tilde{G}} I = I$. If $[G, G]$ is a $p$-group, Lemma \ref{pgp-lem} shows the non-existence of such an ideal, and hence $\Delta_K^{\rho \circ f}(t)$ does not vanish for any $K$ and $f$. If $[G, G]$ is not a $p$-group, let $H \subset G_0$ be a minimal subgroup not having prime-power order. There are two cases (i), (ii) as above, but in either case there exists a finitely generated nonzero left ideal $J$ of $\mathbb{Z}[\tilde{H}]$ such that $I_{\tilde{H}} J = J$ by Lemmas \ref{cyclic-lem} and \ref{metabel-lem}. Define $I = \mathbb{Z}[\tilde{G}] J$. Since $I_{\tilde{G}} \supset I_{\tilde{H}}$, we find $I_{\tilde{G}} I = I$ and hence there exist $K$ and $f$ such that $\Delta_K^{\rho \circ f} (t) = 0$.
\end{proof}

\section{Concluding remark}\label{sec:remark}

In this section, we exhibit the upper bounds of the TAV order of several knots explicitly, and discuss related problems on the TAV order and twisted Alexander polynomials of $3$-dimensional manifolds associated to representations 
of finite groups. 

By computer-aided calculation, we can provide 
the upper bound of $\ord(K)$ for several non-fibered knots as follows:

$$
126\leq 
\ord(K)\leq
\begin{cases}
168 & \text{if}~K= 7_4, 8_3,\\
336 & \text{if}~K= 5_2, 6_1, 9_5, 9_{10}, 10_3, 10_{16}, 10_{33}, 10_{65}, 10_{74},10_{122},\\
576 & \text{if}~K= 9_7, 10_1, 10_7, 10_{77},\\
2520 & \text{if}~K= 7_2, 8_1, 8_6, 8_{14}, 9_4, 9_{13}, 10_4, 10_{31}, 10_{35}, 10_{163},\\
5040 & \text{if}~K= 8_8,\\
20160 & \text{if}~K= 9_{14}, 9_{19}, 9_{23}, 9_{38}, 10_{24}, 10_{30}, 10_{38}, 10_{68}, 10_{97}, 10_{129},\\
40320 & \text{if}~K=10_8,10_{14}, 10_{34}, 10_{144}.
\end{cases}
$$

If a knot $K$ admits an epimorphism $\pi \colon G(K)\to G(K')$ where 
$K'$ is one of the above non-fibered knots, 
then $\ord(K)$ has the same upper bound with $\ord(K')$ by Theorem~\ref{thm:main-1}(iv). 
As a first problem, we propose the following:  

\begin{problem}
Determine the TAV order $\ord(K)$ of the above $41$ non-fibered knots $K$.  Moreover, what is the TAV order of the remainder $79$ non-fibered prime knots with $10$ or fewer crossings? 
\end{problem}

As for the inequality in Theorem \ref{thm:main-1}(v), 
there is an example such that the equality does not hold. 
In fact, for the periodic knot $K=10_{120}$ of order $2$ and its 
quotient knot $K'=5_2$ (see \cite{KS08-1}), we see from Theorem~\ref{thm:main-2} 
that $\ord(10_{120})<\ord(5_2)$ holds. However, at this point, we do not know the existence of a proper degree one map $E_K\to E_{K'}$ such that $\ord(K)$ is strictly smaller than $\ord(K')$.  

\begin{problem}
Find a proper degree one map $E_K\to E_{K'}$ such that $\ord(K)<\ord(K')$. Moreover, is there a pair of distinct non-fibered knots $K,K'$ such that the equalities in Theorem \ref{thm:main-1}(v) and (vi) hold?
\end{problem}

In view of Theorem \ref{thm:main-4}, for any TAV group $G$, namely, any finite group $G$ normally generated by a single element, and its commutator subgroup $[G,G]$ is not a $p$-group, there exist a non-fibered knot $K$ and an epimorphism $f \colon G(K) \to G$ such that the corresponding twisted Alexander polynomial $\D_{K}^{\rho\circ f}(t)$ is zero. For example, the dihedral group $D_{15}$ is a TAV group. 

\begin{problem}
Find a non-fibered knot $K$ and an epimorphism $f \colon G(K) \to D_{15}$ such that $\D_K^{\rho\circ f}(t)=0$. 
\end{problem}

Furthermore, 
we know that the intersection of the image of the TAV order $\ord|_{\mathcal{N}} \colon \mathcal{N}\to\mathbb{N}$ and the closed interval $[1,126]$ is contained in the finite set 
$$
\{24,30,42,48,60,66,70,72,78,84,90,96,102,110,114,120,126\},
$$ 
and that Theorem \ref{thm:main-4} guarantees the existence of TAV groups of these orders. 
However, it is not known whether these values will actually be realized as the TAV orders of 
non-fibered knots. At least we see from Theorem \ref{thm:main-2} that $\mathrm{Im}\, \ord$ contains the finite set $\{24,60,96,120\}$.

\begin{problem}
Determine the image of the TAV order $\ord|_{\mathcal{N}} \colon \mathcal{N}\to\mathbb{N}$. 
\end{problem}

Finally, 
we mention a related problem on representations of $3$-manifold groups. 
For a compact, orientable, connected $3$-manifold 
$N$ with toroidal or empty boundary, if $\phi\in H^1(N;\Z)=\mathrm{Hom}(\pi_1(N),\Z)$ is a non-fibered class, then the \textit{twisted Alexander vanishing (TAV) order} $\ord(N,\phi)$ is defined to be the smallest order of a finite group $G$ such that there exists an epimorphism $f \colon \pi_1(N)\to G$ 
with $\D_{N,\phi}^{\rho\circ f}(t)=0$ (see \cite{MS22-1}). 
However, nothing is known for $3$-manifold groups so far. 
Accordingly, we conclude the paper with the following problem: 

\begin{problem}
Study the basic properties of $\ord(N,\phi)$ as described in Theorem~\ref{thm:main-1}. 
\end{problem}

\subsection*{Acknowledgments}
The authors would like to thank the anonymous referee for the evaluation of our paper and for useful suggestions. They are supported in part by 
JSPS KAKENHI Grant Numbers JP16K05159, JP20K03596, JP20K14309, and JP21K03253. A part of this research was done during the second and third authors' stays at the Research Institute for Mathematical Sciences, Kyoto University. They would like to express their sincere thanks for the hospitality. 

\bibliographystyle{amsplain}

\end{document}